\newfont{\bb}{msbm10 at 12pt}
\newtheorem{teorema}{Theorem}
\newtheorem{proposicion}{Proposition}
\newtheorem{lema}{Lemma}
\newtheorem{corolario}{Corollary}
\newtheorem{nota}{Remark}
\def\r{\hbox{\bb R}}
\def\fl{\longrightarrow}
\def\c{\hbox{\bb C}}
\def\z{\hbox{\bb Z}}
\def\Si{\Sigma}
\newcommand{\g}[2]{\langle #1,#2 \rangle }
\begin{document}

\begin{center}
\rule{15cm}{1.5pt}\vspace{1cm}

{\LARGE \bf Proper harmonic maps from hyperbolic\\[.5cm] Riemann surfaces into the Euclidean plane.}\\
\vspace{.5cm}

{\bf Antonio Alarcón$^a$ and José A. Gálvez${}^{b}$}\vspace{.5cm}

\rule{15cm}{1.5pt}
\end{center}

\noindent{\small ${}^a$Departamento de Matemática Aplicada, Facultad de Informática, Universidad de Murcia, 30100 Espinardo, Murcia, Spain; e-mail: ant.alarcon@um.es \\
${}^b$Departamento de Geometr\'\i a y Topolog\'\i a, Facultad de Ciencias,
Universidad de Granada, 18071 Granada, Spain; e-mail: jagalvez@ugr.es}

\begin{abstract}
Let $\Si$ be a compact Riemann surface and $D_1,\ldots,D_n$ a finite number of
pairwise disjoint closed disks of $\Si$. We prove the existence of a proper harmonic
map into the Euclidean plane from a hyperbolic domain $\Omega$ containing
$\Si\backslash\cup_{j=1}^n D_j$ and of its topological type. Here, $\Omega$ can be
chosen as close as necessary to $\Si\backslash\cup_{j=1}^n D_j$. In particular, we
obtain proper harmonic maps from the unit disk into the Euclidean plane, which
disproves a conjecture posed by R. Schoen and S.T. Yau.
\end{abstract}

\section{Introduction.}

It is classically known that there is no proper holomorphic map from the unit disk
into the complex plane. To this respect, in the more general class of harmonic maps,
E. Heinz proved, in 1952, there is no harmonic diffeomorphism from the unit disk
onto the Euclidean plane $\r^2$ \cite{H}. However, the problem of existence of a
proper harmonic map from the unit disk into the Euclidean plane remained open. Thus,
in 1985, R. Schoen and S.T. Yau conjectured the non-existence of these maps
\cite{SY}.

We solve that problem and, in fact, prove

\begin{teorema}\label{t1}
Let $\Sigma$ be a compact Riemann surface and $D_1,\ldots,D_n$ a family of pairwise
disjoint closed disks of $\Si$. Consider $n$ closed disks $D'_1,\ldots,D'_n$ such
that $D'_j\subseteq int D_j$, $j=1,\ldots, n$, then there exists a proper harmonic
map
$$
\varphi:\Omega\fl\r^2,
$$
where $\Omega$ is a domain of the topological type of
$\Si\backslash\left(\cup_{j=1}^nD_j\right)$ satisfying
\begin{equation}\label{contenido}
\Si\backslash\left(\cup_{j=1}^nD_j\right)\subseteq \Omega\subseteq
\Si\backslash\left(\cup_{j=1}^nD'_j\right).
\end{equation}
\end{teorema}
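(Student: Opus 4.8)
\medskip
\noindent\textbf{Proof strategy.}
I would obtain $\varphi$ as the limit, uniform on compact sets, of harmonic maps defined on an increasing sequence of compact bordered Riemann surfaces that exhausts $\Omega$, in the spirit of the labyrinth constructions of Jorge--Xavier and Nadirashvili. The starting point is the algebraic shape of a harmonic map: a smooth $\varphi\colon U\fl\c\cong\r^2$ on an open subset $U$ of $\Si$ is harmonic if and only if $\varphi_z$ is holomorphic in every conformal chart, equivalently $\varphi=f+\overline g$ locally with $f,g$ holomorphic; this representation is global on $U$ only when the periods of $f$ and of $\overline g$ around each cycle cancel, so the holomorphic data must be produced together with a vanishing-period (flux) condition at every stage. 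The term $\overline g$ is indispensable: a single holomorphic function on a hyperbolic domain is never proper, for otherwise $-\log\abs f$ would be harmonic where $\abs f>0$ near the ideal boundary, bounded above, and with limit $-\infty$ at the ideal boundary, which a hyperbolic surface does not admit.

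The two tools I would set up first are: (i) a Runge/Mergelyan-type theorem for harmonic maps --- if $K$ is a Runge compact set in an open Riemann surface, or in a compact bordered surface, $U$ and $\varphi$ is harmonic near $K$, then $\varphi$ is a uniform limit on $K$ of harmonic maps on $U$ --- which follows from the classical Behnke--Stein Runge theorem for holomorphic $1$-forms applied to $\partial\varphi$ and $\overline\partial\varphi$, choosing the approximating forms so that the combined periods stay zero, which is possible because on an open Riemann surface the periods of holomorphic $1$-forms over a prescribed finite family of cycles are unconstrained; and (ii) a \emph{labyrinth} in a thin collar $A$ --- a finite disjoint union of annuli, one around each ideal boundary component of the current surface --- namely a finite family of coordinate arcs (``walls'') with narrow, mutually misaligned gaps, refined enough that a harmonic map whose restriction to the walls is large and slowly varying along them must be large on all of $A$, the lower bound propagating across the thin corridors by the mean-value property applied to $\varphi$ itself.

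With these in hand the recursion runs as follows. Put $W_0=\Si\backslash\bigcup_j int\,\widetilde D_j$ with $\widetilde D_j\subseteq int\,D_j$ chosen so close to $D_j$ that every surface built below remains inside $\Si\backslash\bigcup_j D'_j$, and start from an arbitrary harmonic map on $W_0$. At step $m$, given a harmonic map $\varphi_{m-1}$ on a compact bordered surface $W_{m-1}$ of the topological type in the statement, with $W_{m-1}\subseteq\Si\backslash\bigcup_j D'_j$ and with $\abs{\varphi_{m-1}}$ already ``at level $m$'' on a collar of $\partial W_{m-1}$ inside $W_{m-1}$, one chooses $W_m\supseteq W_{m-1}$ slightly larger, of the same topological type, still inside $\Si\backslash\bigcup_j D'_j$; builds a labyrinth in $A_m=\overline{W_m\backslash W_{m-1}}$; extends $\varphi_{m-1}$ to $W_m$ using (i) while prescribing, via Mergelyan on $W_{m-1}$ together with the labyrinth, slowly varying values on the walls whose modulus grows from the level-$m$ values inherited along $\partial W_{m-1}$ up to level $m+1$; and, if needed, corrects by a harmonic map smaller than $\epsilon_m$ on $W_{m-1}$. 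By (ii) the resulting $\varphi_m$ has modulus at least level $m+1$ on $A_m$ and on a collar of $\partial W_m$ inside $W_m$, and its modulus drops by at most a summable amount anywhere on $A_m$, while $\abs{\varphi_m-\varphi_{m-1}}<\epsilon_m$ on $W_{m-1}$. Taking $\sum_m\epsilon_m<\infty$, the $\varphi_m$ converge to a harmonic map $\varphi\colon\Omega\fl\r^2$ on $\Omega:=\bigcup_m W_m$, with $\abs{\varphi(p)}\fl\infty$ as $p$ leaves every compact subset of $\Omega$; hence $\varphi$ is proper. Keeping each enlargement small yields \eqref{contenido}; $\Omega$ has the topological type of $W_0$, which is that of $\Si\backslash\bigcup_j D_j$; and $\Omega$ is hyperbolic since $\Si\backslash\Omega$ contains the disks $D'_j$.

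The heart of the matter is the inductive step. One must run the Runge/Mergelyan approximation so that the new map stays $\epsilon_m$-close to the old one on $W_{m-1}$ \emph{and} remains a single-valued harmonic map --- this is where the vanishing-period constraint, invisible on simply connected pieces, has to be carried along on the eventually non-planar $\Omega$ --- while simultaneously using the labyrinth to force the modulus to be uniformly above level $m+1$ over the \emph{entire} added collar $A_m$, not merely on the walls, and to not drop by more than a summable amount anywhere on $A_m$, all with constants uniform in $m$. Calibrating the fineness of the labyrinth against the size of the corrective perturbation and the a priori uncontrolled part of the extended map on $A_m$ is the real work; the concurrent bookkeeping that preserves the topological type of the $W_m$ and keeps $\Omega$ trapped between $\Si\backslash\bigcup_j D_j$ and $\Si\backslash\bigcup_j D'_j$ is comparatively routine.
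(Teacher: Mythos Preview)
Your overall framework---an inductive Runge--type construction producing harmonic maps on an exhaustion, converging uniformly on compacta to a proper harmonic limit---matches the paper's. The paper additionally reduces to a single boundary component at a time: for each fixed $j_0$ it builds a proper harmonic $\varphi_{j_0}$ on a domain that only cuts into $D_{j_0}$, and then sets $\varphi=\sum_{j_0}\varphi_{j_0}$. This is a convenience you do not use, but not the essential point.

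The substantive divergence, and the place where your proposal has a genuine gap, is the inductive step. You want to force $\abs{\varphi_m}$ large on the whole collar $A_m$ by prescribing large, slowly varying values on a labyrinth of walls via Mergelyan and then letting ``the mean-value property applied to $\varphi$ itself'' push the lower bound across the corridors. But $\abs{\varphi_m}$ (and $\abs{\varphi_m}^2$) is only \emph{sub}harmonic, so the maximum principle controls it from above, not from below; and Mergelyan approximation on $W_{m-1}\cup(\text{walls})$ tells you nothing about $\varphi_m$ on the corridors or near the gaps, where the map could dip toward~$0$ with no a~priori bound available to feed into a harmonic-measure estimate. The paper's key Lemma~1 solves exactly this problem by \emph{directional} deformations rather than a labyrinth: one first adds real parts of meromorphic functions with simple poles at points $p_j$ on a circle inside the collar, each in the direction $e_1^j=F(p_j)/\abs{F(p_j)}$, making a specific \emph{harmonic coordinate} large near chosen points $a_j$; then one uses Royden's Runge theorem to add further harmonic functions, localized near the arcs $Q_j$ joining consecutive $a_j$'s and again in carefully chosen directions $\overline e_1^{\,j}$, so that along the entire new boundary some coordinate of $G$ is provably large. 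The lower bound $\abs{G}>r-\varepsilon_3$ on the full annulus (the delicate third item of the paper's Lemma~4) is then a five-case computation showing that at every point one explicit harmonic component stays big. This coordinate-by-coordinate bookkeeping is precisely what substitutes for your unproven propagation claim; without an analogue of it, the labyrinth alone does not deliver the collar lower bound.
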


This result provides the existence of proper harmonic maps from a hyperbolic Riemann
surface of arbitrary finite topology into the Euclidean plane. In particular, when
$\Sigma$ is the Riemann sphere and $n=1$, Theorem \ref{t1} gives us the existence of
proper harmonic maps from the unit disk, and hence settles negatively the conjecture
above.

At this point we observe that, in general, we cannot fix the conformal structure of
the domain $\Omega$ where the harmonic map is defined. However, due to the control
of its topological type, this can be done when $\Si$ is the Riemann sphere. Thus,
one obtains

\begin{corolario}\label{t2}
Let $\Omega\subseteq\c$ be any n-connected domain. Then there exist proper harmonic
maps from $\Omega$ into the Euclidean plane.
\end{corolario}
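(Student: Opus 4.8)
The plan is to reduce, by a conformal change of coordinates, to an application of Theorem \ref{t1} with $\Si$ the Riemann sphere $\c\cup\{\infty\}$, and then to arrange that the domain on which the harmonic map is manufactured is conformally equivalent to the given $\Omega$, and not merely of its topological type. First I would invoke a classical uniformization theorem of Koebe for finitely connected planar domains: every $n$-connected domain $\Omega\subseteq\c$ is conformally equivalent to a circular domain, i.e.\ to $\Si\setminus\bigcup_{j=1}^n\overline{B_j}$ with $\overline{B_1},\ldots,\overline{B_n}$ pairwise disjoint closed round disks (I take the boundary components to be nondegenerate; punctures would have to be treated separately, since Theorem \ref{t1} only removes genuine disks). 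Replacing $\Omega$ by this model, it suffices to construct a proper harmonic map $\Si\setminus\bigcup_{j=1}^n\overline{B_j}\fl\r^2$.

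For $n=1$ the argument is immediate: $\Omega$ is conformally either $\c$, in which case the identity map does the job, or the unit disk, in which case Theorem \ref{t1} applied to $D_1=\overline{B_1}$ and any $D_1'\subseteq int\, D_1$ produces a proper harmonic map from a simply connected domain $\Omega'$ with $\Si\setminus\overline{B_1}\subseteq\Omega'\subseteq\Si\setminus D_1'$; such an $\Omega'$ is an open simply connected proper subdomain of $\Si$ that omits a disk, hence hyperbolic, hence conformally the unit disk, and precomposing with a Riemann map $\Omega\to\Omega'$ finishes the case. For $n\ge 2$ the real content is that $n$-connected planar domains are not classified up to conformal equivalence by their topology, so Theorem \ref{t1} cannot be used as a black box: the domain $\Omega'$ it furnishes is $n$-connected and planar, hence again (Koebe) conformally a circular domain $\Si\setminus\bigcup_j\overline{C_j}$, but a priori the disks $\overline{C_j}$ bear no relation to the $\overline{B_j}$. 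Here I would lean on the two freedoms granted by Theorem \ref{t1}: the removed disks $D_j\supseteq D_j'$ are at our disposal, and $\Omega'$ may be taken as close as we wish to $\Si\setminus\bigcup_j D_j$. Running the construction with $D_j$ a round disk slightly larger than $\overline{B_j}$ and $D_j'$ one slightly smaller, and letting both shrink down to $\overline{B_j}$, the resulting domains $\Omega'$ stay trapped between $\Si\setminus\bigcup_j D_j$ and $\Si\setminus\bigcup_j D_j'$ and hence converge, in the sense of kernels, to $\Si\setminus\bigcup_j\overline{B_j}$; their conformal classes therefore converge to that of $\Omega$ in the moduli space of $n$-connected planar domains. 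A continuity/degree argument in that moduli space — or, equivalently, a normal-families limit of the associated proper harmonic maps along an exhaustion of $\Si\setminus\bigcup_j\overline{B_j}$, using whatever uniform control the proof of Theorem \ref{t1} provides — should then deliver a proper harmonic map on a domain conformally equivalent to $\Omega$, and precomposing with that equivalence completes the proof.

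The step I expect to be the genuine obstacle is exactly this last one: upgrading ``of the topological type of $\Si\setminus\bigcup_j D_j$'' to ``conformally equivalent to $\Si\setminus\bigcup_j D_j$''. The difficulty is that the domain $\Omega'$ produced by Theorem \ref{t1} is not canonically attached to the data $(D_j,D_j')$, so a naive continuity or degree argument on moduli space is not literally available; one must either revisit the construction behind Theorem \ref{t1} and extract a version — valid precisely when $\Si$ is the sphere — in which the domain depends continuously on the chosen disks, or can even be taken to equal $\Si\setminus\bigcup_j D_j$ on the nose, or else establish uniform a priori estimates strong enough that the limit of proper harmonic maps on an exhaustion of the target circular domain is again proper. (That conformal equivalences between finitely connected circular domains are Möbius makes matching ``up to conformal equivalence'' the same as matching ``up to a Möbius change of the model'', which is all that is needed.) It is this genus-zero feature — absent for surfaces of higher genus, where the moduli of $\Si\setminus\bigcup_j D_j$ are too rich to be pinned down — that accounts for the remark that the conformal structure of the domain, uncontrollable in general, can be prescribed when $\Si$ is the Riemann sphere.
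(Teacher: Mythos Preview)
Your proposal has a genuine gap for $n\ge 2$: you never actually carry out the step you yourself flag as the obstacle. Theorem \ref{t1} does not attach a canonical domain $\Omega'$ to the data $(D_j,D_j')$, so there is no well-defined map from these choices into the moduli space of $n$-connected circular domains on which to run a continuity or degree argument. The limiting alternative you mention is equally problematic: properness is not preserved under locally uniform limits of harmonic maps, and nothing in Theorem \ref{t1} supplies the uniform lower bounds on $|\varphi|$ near the boundary that would be needed to rescue it. As written, the argument for $n\ge 2$ is a plan that names its own missing ingredient without providing it.

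The paper sidesteps this difficulty entirely with a much simpler idea: rather than trying to force a single application of Theorem \ref{t1} to land on the right conformal class, it \emph{decomposes} the problem by boundary component. After reducing (Koebe) to a circular domain bounded by circles $C_1,\ldots,C_k$ and punctures $p_{k+1},\ldots,p_n$, one observes that for each disk $D_j$ with $\partial D_j=C_j$ the complement $\overline{\c}\setminus D_j$ is simply connected, hence conformally the unit disk; your own $n=1$ argument then yields a proper harmonic $\varphi_j:\overline{\c}\setminus D_j\to\r^2$. For the punctures one takes $\varphi_j$ holomorphic with a pole at $p_j$. Every $\varphi_j$ is defined on all of $\Omega$, and the sum $\varphi=\sum_j\varphi_j$ is harmonic there; near the boundary component $C_j$ (or $p_j$) the term $\varphi_j$ blows up while the remaining terms stay bounded on a neighborhood, so $|\varphi|\to\infty$ and $\varphi$ is proper. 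Thus the genus-zero miracle is not that one can control the moduli of the $n$-connected output of Theorem \ref{t1}, but that one never needs to: the $n=1$ case, where the conformal type is automatic, suffices as a building block.
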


The paper is organized as follows. In Section \ref{s2} we establish the notation
that will be used throughout this work. In addition, we prove that there is no
proper harmonic map from a Riemann surface of hyperbolic type into a complete flat
surface non-isometric to the Euclidean plane. This fact also motivates our study and
shows that the conjecture is true when the target space is a complete flat surface
non-isometric to $\r^2$.

We start Section \ref{phm} by proving Theorem \ref{t1}, which gives the existence of
proper harmonic maps from hyperbolic Riemann surfaces into the Euclidean plane. In
its proof, we show a slightly more general result asserting that, under certain
conditions, a harmonic map can be approximated by proper harmonic maps in any
compact set of its domain. Moreover, as a consequence of the Theorem above we also
prove Corollary \ref{t2}.

Finally, in Section \ref{ML} we prove the following Lemma, which is the basic tool
in order to prove the previous results.
\begin{lema}\label{mainlemma}
Let $\widetilde{\Si}$ be a compact Riemann surface and $K_0\subseteq\widetilde{\Si}$
a compact set. Consider two closed disks $D_1$, $D_2$ of
$\Si=\widetilde{\Si}\backslash int K$ such that $D_1\subseteq int D_2$ and $K_0\cap
D_2$ is empty. Let $F:\Si\backslash int D_1\fl\r^2$ be a harmonic map  and $r,R$ two
positive real numbers  such that
$$
r<|F(p)|<R,\qquad \mbox{for all }p\in D_2\backslash int D_1.
$$
Given three positive real numbers $\varepsilon_1,\varepsilon_2,\varepsilon_3$, there
exist a closed disk $D$ and a harmonic map $G:\Si\backslash int D\fl\r^2$
satisfying:
\begin{enumerate}
\item $D_1\subseteq int D\subseteq D\subseteq int D_2$,
\item $|F(p)-G(p)|<\varepsilon_1$, for all $p\in\Si\backslash int D_2$,
\item $R-\varepsilon_2<|G(p)|<R$, for any $p\in\partial D$,
\item $r-\varepsilon_3<|G(p)|$, for each $p\in D_2\backslash int D$.
\end{enumerate}
\end{lema}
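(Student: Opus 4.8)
The plan is to modify $F$ only inside $D_2$, pushing the modulus up to $\approx R$ in a thin collar near the new boundary, and to read off $D$ from a level set of the modified map.

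First I would fix a conformal coordinate identifying $D_2\backslash\mathrm{int}\,D_1$ with a round annulus $\{1\le|z|\le\mu\}$ and extending it a little, so that $F$ is harmonic on a neighbourhood; thus $D_1\leftrightarrow\{|z|\le1\}$, $D_2\leftrightarrow\{|z|\le\mu\}$, and $F$ omits $0$ on the annulus. I would then produce a harmonic map $G$ on $\Si\backslash\mathrm{int}\,D$, for a closed disk $D$ with $D_1\subseteq\mathrm{int}\,D\subseteq D\subseteq\mathrm{int}\,D_2$ whose boundary is a Jordan curve inside the annulus close to some circle $\{|z|=\sigma\}$, subject to the following: $|G-F|<\varepsilon_1$ on $\Si\backslash(\text{disk of radius }\sigma')$ for some $\sigma'\in(\sigma,\mu)$ — a fortiori on $\Si\backslash\mathrm{int}\,D_2$, which is (2) — while on the collar $\{\sigma\le|z|\le\sigma'\}$ the map $G$ is obtained from $F$ by adding a large harmonic perturbation which raises $|G|$ to values $\approx R$ near $\{|z|=\sigma\}$ and brings it back down to $\approx|F|$ at $\{|z|=\sigma'\}$. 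The perturbation must be assembled from harmonic functions that are uniformly tiny on $\{|z|\ge\mu\}$ but of size $\approx R$ on $\{|z|=\sigma\}$ — for instance combinations of high negative powers of $z$ and $\bar z$, whose modulus decays rapidly away from the inner circle — so that (2) comes for free. Once such a $G$ is in hand I would let $\partial D$ be the component of the level set $\{|G|=R-\varepsilon_2/2\}$ enclosing $D_1$; a local analysis shows this is a single Jordan curve near $\{|z|=\sigma\}$, and on it $|G|=R-\varepsilon_2/2\in(R-\varepsilon_2,R)$, which is (3). As $K_0\cap D_2=\emptyset$, nothing near $K_0$ is touched, and (1) is built into the choice of $D$.

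Condition (4) — $|G|>r-\varepsilon_3$ on all of $D_2\backslash\mathrm{int}\,D$ — is the point of the whole construction and is what dictates the shape of the perturbation. On the outer part of the collar, where $G\approx F$, this is immediate from $|F|>r$. On the inner part, near $\partial D$, I would arrange that $G$ is close to a zero-free holomorphic or antiholomorphic map (according as $F$ is orientation-preserving or reversing there); then $\log|G|$ is close to a harmonic function, hence obeys the minimum principle up to a small error, so $|G|$ is pinned between its value $\approx R-\varepsilon_2/2$ on $\partial D$ and its value $\approx|F|>r$ on the inner edge of the ``matching'' region. The main difficulty is to patch these two regimes into one genuine harmonic map on the bordered surface $\Si\backslash\mathrm{int}\,D$, guaranteeing in particular that $G$ does not vanish — or even drop below $r-\varepsilon_3$ — anywhere in the transition zone between ``$|G|\approx R$, nearly (anti)holomorphic'' and ``$G\approx F$''. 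I would do the patching by a Runge/Mergelyan-type approximation theorem for harmonic maps into $\r^2$ on $\Si\backslash\mathrm{int}\,D$, being careful that the compact set on which the model is prescribed has complement with no relatively compact component (which forces one to join the collar to $\Si\backslash\mathrm{int}\,D_2$ along an auxiliary arc and to extend the model suitably along it), and then to control the approximation error against the subharmonicity of $|G|^2$ (for the upper bounds) and the near-harmonicity of $\log|G|$ (for the lower bound). The work is genuinely quantitative: the perturbation must be large enough to drag $|G|$ into the narrow window $(R-\varepsilon_2,R)$ along an entire curve around $D_1$, small enough outside $D_2$, and shaped so that $|G|$ never dips below $r-\varepsilon_3$; reconciling these three demands is where the argument concentrates.
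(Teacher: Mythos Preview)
Your proposal has the right overall architecture --- local modification in the annulus, Runge/Mergelyan approximation, reading $\partial D$ off a level set of $|G|$ --- but there is a genuine gap in your mechanism for condition~(4). You propose to ``arrange that $G$ is close to a zero-free holomorphic or antiholomorphic map'' on the inner collar, so that $\log|G|$ is nearly harmonic and a minimum principle controls it from below. But $F$ is an arbitrary harmonic map, $F=f+\bar g$ with $f,g$ holomorphic, and is in general nowhere close to either holomorphic or antiholomorphic; there is no ``orientation-preserving or reversing'' dichotomy to appeal to. Any $G$ that is nearly (anti)holomorphic on the inner collar while nearly equal to $F$ on the outer collar must fail to be nearly (anti)holomorphic on the transition zone --- exactly the region where (4) is in doubt --- so your minimum-principle argument is unavailable precisely where it is needed. (A secondary issue: ``high negative powers of $z$ and $\bar z$'' are harmonic only in the local chart, not on $\Si\backslash\mathrm{int}\,D_1$; producing global harmonic functions with the required concentration is itself a nontrivial step.)

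The paper replaces this with an idea your sketch lacks: \emph{directional} perturbations, chosen so that some scalar component of $G$ is never decreased. One first places finitely many points $p_j$ on a curve in the annulus and adds, at each $p_j$, the real part of a meromorphic function with a simple pole at $p_j$ (existence via the Noether gap theorem), scaled small and pushed in the radial direction $e_1^j=F(p_j)/|F(p_j)|$. Along a short curve into the pole the $e_1^j$-component rises monotonically from roughly $|F(p_j)|>r$ to roughly $|F(p_j)|+3(R-r)$, so the norm can only go up there; this yields a map $h_n$ with $|h_n|>2R-r$ at a discrete set of points $a_j$ and $|h_n|\approx|F|$ away from them. Then on each boundary arc $Q_j$ between consecutive $a_j$'s, Royden's Runge-type theorem is used to add a bump in the direction $\bar e_1^j$ \emph{orthogonal} to $h_n(a_j)/|h_n(a_j)|$, leaving the $\bar e_2^j$-component untouched. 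Condition~(4) then follows from the trivial inequality $|G(p)|\ge|\langle G(p),v\rangle|$ with $v$ chosen region by region as $e_1^j$ or $\bar e_2^j$: in every part of $D_2\backslash\mathrm{int}\,D$ some coordinate of $G$ is provably at least $r-O(\varepsilon_0)$. This directional bookkeeping is the engine behind the lower bound, and nothing in your outline substitutes for it.
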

This lemma provides, starting from a harmonic map $F$, a new harmonic map $G$
defined in a slightly smaller domain. In addition, $G$ can be chosen so that it is
very close to $F$ in a fixed compact domain $\Si\backslash int D_2$ and the norm of
$G$ can be made as large as needed on the boundary of its domain and is
well-controlled outside $\Si\backslash int D_2$.

This is the key step in order to construct a suitable sequence of harmonic maps
whose limit will prove Theorem \ref{t1}. In the proof of Lemma \ref{mainlemma} we
shall employ some ideas used in the study of different topics such as minimal
surfaces (see, for instance, \cite{M}, \cite{AFM}) or holomorphic embeddings from
Riemann surfaces into the complex 2-plane $\c^2$ (see \cite{FW}).

\section{Preliminaries.}\label{s2}

Let us consider the Euclidean plane $\r^2$ with linear coordinates $(x,y)$ endowed
with its usual induced metric $dx^2+dy^2$.

Let $\Si$ be a Riemann surface and $F:\Si\fl\r^2$ a map. Given an orthonormal basis
$S=\{e_1,e_2\}$, we shall denote
$$
F_{(1,S)}:=\langle F,e_1\rangle\quad\mbox{and}\quad F_{(2,S)}:=\langle F,e_2\rangle.
$$
Thus, $F$ is a harmonic map into $\r^2$ if and only if $F_{(j,S)}:\Si\fl\r$ are
harmonic functions, $j=1,2$.

In our construction process we shall use that, given a harmonic map $F:\Si\fl\r^2$,
a harmonic function $h:\Si\fl\r$ and an orthonormal basis $S=\{e_1,e_2\}$, then the
new map $G:\Si\fl\r^2$ defined in local coordinates as
$$
G_{(1,S)}=F_{(1,S)}+h\quad\mbox{and}\quad G_{(2,S)}=F_{(2,S)},
$$
is also harmonic. This new harmonic map $G$ can be considered as a deformation of
$F$ in the direction of $e_1$.

On the other hand, we shall introduce the following notation for subsets of a
Riemann surface $\Si$:
\begin{itemize}
\item Given $K\subseteq\Si$, we denote by $\Si_K$ the set $\Si\backslash int(K)$, where
$int$ stands for the interior of a set.
\item Let $D_1,D_2$ be two closed disks in $\Si$. We shall denote $D_1<D_2$ if $D_2\subseteq
int(D_1)$, or equivalently, if $\Si_{D_1}\subseteq int(\Si_{D_2})$.
\end{itemize}

We finish this Section by observing that if $\Si$ is a Riemann surface of hyperbolic
type and $M$ is a complete flat surface which is not isometric to the Euclidean
plane, then there exists no proper harmonic map from $\Si$ into $M$. This fact shows
a clear difference between $\r^2$ and the rest of complete flat surfaces.
\begin{proposicion}
Let $\Si$ be a Riemann surface of hyperbolic type and $M$ a complete flat surface
non-isometric to the Euclidean plane. Then there is no proper harmonic map from
$\Si$ into $M$.
\end{proposicion}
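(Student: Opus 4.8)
The plan is to classify the complete flat surfaces $M$ non-isometric to $\r^2$ and show that in each case a proper harmonic map $F:\Si\to M$ would force $\Si$ to carry a non-constant bounded-or-parabolic-type harmonic function that contradicts hyperbolicity. Recall that a complete flat surface is a quotient $\r^2/\Gamma$ by a group $\Gamma$ of Euclidean isometries acting freely and properly discontinuously. The possibilities (other than $\r^2$ itself) are: the cylinder $\r^2/\langle T\rangle$ with $T$ a translation; the flat torus $\r^2/\z^2$; the open M\"obius band; the Klein bottle; and the flat cone is excluded since it is not complete or not flat at the vertex. First I would reduce, by lifting through orientation double-covers, to the orientable cases: the cylinder and the torus (a hyperbolic surface lifts to a hyperbolic surface under a finite or $\z$-cover only in the sense we need — more carefully, one passes to the universal cover of $M$ and pulls back, so it suffices to handle $M$ compact or $M$ a cylinder directly).

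Next, for the \emph{torus} case $M=\r^2/\z^2$: since $M$ is compact, every continuous map $\Si\to M$ is trivially proper only if $\Si$ is compact, but a compact Riemann surface is not of hyperbolic type in the potential-theoretic sense used here (it carries no non-constant positive superharmonic functions / its universal cover is $\d$ only in the genus $\ge 2$ case, but then $\Si$ itself is compact — and a proper map to a compact target from a non-compact $\Si$ is impossible). So the torus and Klein bottle cases are vacuous. For the \emph{cylinder} $M=\{(u,v):v\in\r/2\pi\z\}$ with coordinate $u\in\r$: a harmonic map to $M$ has the property that the $u$-coordinate $u\circ F$ is a well-defined real harmonic function on $\Si$ (it lifts since $\r$ is the universal cover direction), while the $v$-coordinate is harmonic only locally. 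Properness of $F$ forces $|u\circ F|\to\infty$ at the ideal boundary of $\Si$, i.e. $u\circ F$ is a proper harmonic exhaustion-type function; after possibly replacing $u\circ F$ by $e^{u\circ F}$ or $e^{-u\circ F}$ on the ends, one obtains a non-constant positive harmonic (or positive superharmonic) function on $\Si$, contradicting that $\Si$ is hyperbolic (i.e. $\Si\in O_G$, admitting no Green's function is the opposite — hyperbolic here should mean it \emph{does} admit a Green's function, equivalently the universal cover is $\d$; then the obstruction is rather that a proper harmonic function whose level sets are compact gives, via the maximum principle, a contradiction with the existence of the hyperbolic metric's behavior, or more simply: such a $u\circ F$ would be a proper harmonic function bounded below on each end, forcing $\Si$ to be parabolic).

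I would therefore make the dichotomy precise using the standard fact: a Riemann surface is of hyperbolic type iff it admits a non-constant negative subharmonic function, iff it carries a Green's function; equivalently it is \emph{not} parabolic. The key lemma to invoke is that a parabolic Riemann surface admits no non-constant proper harmonic function (indeed no non-constant positive superharmonic function), so conversely if $\Si$ is hyperbolic this gives us nothing directly — so the right formulation is the reverse: \emph{if $\Si$ admits a proper harmonic map to a cylinder, then the coordinate function $u\circ F$ exhibits $\Si$ as parabolic}, because its sublevel sets $\{u\circ F<c\}$ and superlevel sets are relatively compact, and a surface possessing a proper harmonic function with compact level sets is parabolic (one builds an exhaustion by the harmonic function and checks the capacity of the ideal boundary vanishes). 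This contradicts hyperbolicity.

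The main obstacle I anticipate is the \emph{non-orientable} targets (M\"obius band, Klein bottle) and making the lifting argument airtight: pulling $F$ back to the orientable double cover $\widetilde M$ of $M$ requires lifting $F$ to $\widetilde F:\widetilde\Si\to\widetilde M$ where $\widetilde\Si\to\Si$ is the corresponding double (or trivial) cover, and one must check that (i) $\widetilde\Si$ is again hyperbolic — true because hyperbolicity passes to and from finite covers — and (ii) $\widetilde F$ is still proper, which holds since the covering $\widetilde M\to M$ is proper and $\widetilde\Si\to\Si$ is proper. The secondary subtlety is the precise potential-theoretic statement that ``proper harmonic function with compact level sets $\Rightarrow$ parabolic''; I would either cite it or prove it in two lines via the flux/Green's-function characterization. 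Everything else is routine.
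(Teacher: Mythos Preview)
Your plan is correct and follows the paper's line: dispose of the compact targets (torus, Klein bottle) by noting that $\Si$ is non-compact, then on the cylinder pull back the unbounded linear coordinate to obtain a proper harmonic function on $\Si$, and invoke the standard fact that this forces parabolicity (the paper cites Grigor'yan for exactly this step). The one genuine difference is your treatment of the M\"obius band. Rather than pass to the orientation double cover as you propose, the paper writes both the cylinder and the band as quotients of $\r^2$ in which the identification involves $x\mapsto x+T$ (and, for the band, also $y\mapsto -y$), and then uses $h(x,y)=|y|$, the distance to the unique closed geodesic. This function descends to both quotients, so $h\circ\varphi$ is proper and harmonic away from the compact set $(h\circ\varphi)^{-1}(0)$, and Grigor'yan's criterion applies directly---no lifting needed. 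Your side-remarks (the $e^{u\circ F}$ idea, which yields a subharmonic rather than harmonic function, and the momentary uncertainty about which sense of ``hyperbolic'' is in play) are false starts but do not damage the final argument you settle on.
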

\begin{proof}
Since $M$ is a complete flat surface non-isometric to the Euclidean plane then $M$
must be a cylinder, a Moebius band, a torus or a Klein bottle. On the other hand,
assume that $\varphi$ is a proper harmonic map from $\Si$ into $M$. As $\Si$ is not
compact and $\varphi$ is proper then $M$ cannot be compact. That is, $M$ must be a
cylinder or a Moebius band.

Thus, if $M$ is a complete flat cylinder then $M$ is isometric to the Euclidean
plane $\r^2$ under the following identification:
$$
(x_1,y_1), (x_2,y_2) \mbox{ are identified if }x_1=x_2+k\,T,\ y_1=y_2,
$$
for a fixed real number $T>0$, and $k\in\z$.

Analogously, if $M$ is a complete flat Moebius band then $M$ is isometric to the
Euclidean plane $\r^2$ under the identification:
$$
(x_1,y_1), (x_2,y_2) \mbox{ are related if }x_1=x_2+k\,T,\ y_1=(-1)^ky_2,
$$
for a fixed real number $T>0$, and $k\in\z$.

In both cases, let us consider the function $h:M\fl\r$ given by the distance from a
point to the compact geodesic $\r\times\{0\}$ on $M$, that is, $h(x,y)=|y|$. Then
$\Si_0=(h\circ\varphi)^{-1}(0)$ is compact and $h\circ\varphi:\Si\fl\r$ is a proper
function which is smooth and harmonic on $\Si\backslash \Si_0$. Thus, using
\cite[Corollary 7.7]{Gry}, $\Si$ would be parabolic.

Therefore, there is no proper harmonic map from a hyperbolic Riemann surface into
$M$.
\end{proof}
\section{Existence of proper harmonic maps}\label{phm}
This Section is devoted to the construction of proper harmonic maps from hyperbolic
domains of a compact Riemann surface $\Si$ into the Euclidean plane and the study of
their properties.

Thus, we start by proving our main result about the existence of proper harmonic
maps from hyperbolic Riemann surfaces into $\r^2$. This proof fundamentally relies
on Lemma \ref{mainlemma} which will be showed in Section \ref{ML}.\vspace{3mm}

\noindent {\it Proof of Theorem \ref{t1}.} Throughout this proof we show a more
general result which will be used for our density result in Corollary \ref{cor2}.
That is, we prove that given any harmonic map
$$
F:\Si\backslash\left(\cup_{j=1}^n
int(D'_j)\right)\fl\r^2
$$
and a real number $\delta_0>0$, there exist an open domain $\Omega$ and a proper
harmonic map $\varphi:\Omega\fl\r^2$ such that
\begin{equation}
\label{delta} |F(p)-\varphi(p)|<\delta_0,\qquad \mbox{for all
}p\in\Si\backslash\left(\cup_{j=1}^n int(D_j)\right),
\end{equation}
where $\Omega$ is of the topological type of $\Si\backslash\left(\cup_{j=1}^n
D_j\right)$ and satisfies (\ref{contenido}).

For that, we shall prove that fixed $j_0$, there exist a domain $\Omega_{j_0}$ and a
proper harmonic map $\varphi_{j_0}:\Omega_{j_0}\fl\r^2$ such that
\begin{equation}
\label{dosprima} \left|\frac{1}{n}\
F(p)-\varphi_{j_0}(p)\right|<\delta:=\frac{\delta_0}{n},\qquad \mbox{for all
}p\in\Si\backslash\left(\cup_{j=1}^n int(D_j)\right),
\end{equation}
where
$$
\Si\backslash\left(int(D_{j_0})\cup(\cup_{j\neq
j_0}int(D'_j))\right)\subseteq \Omega_{j_0}\subseteq
\Si\backslash\left(\cup_{j=1}^nint(D'_j)\right)
$$
and $\Omega_{j_0}\cap int(D_{j_0})$ is an open annulus.

In such a case, the new harmonic map $\varphi=\varphi_1+\ldots+\varphi_n$ is well
defined in the open domain $\Omega=\cap_{j=1}^n \Omega_j$ and proper. Therefore,
$\varphi$ proves the Theorem and also inequality (\ref{delta}).

Up to a translation, we can assume the origin is not contained in the compact set
$F(\cup_{j=1}^nD_j\backslash\cup_{j=1}^n int(D'_j))$. Thus, once $j_0$ is fixed,
there exist two positive real numbers $r_0,R_0$ such that
$$
r_0<\frac{1}{n}\ |F(p)|<R_0,\qquad\mbox{ for all }p\in\Si_{D'_{j_0}}\cap D_{j_0}.
$$

Let us consider a decreasing sequence of positive real numbers $\eta_k$ such that
$\sum_{k\geq 1} \eta_k\leq1/2$ and denote $R_k=R_0+k$. Let us also call
$D_1^0=D'_{j_0}$, $D_2^0=D_{j_0}$, $F_0=(1/n)F$ and $\Sigma'=\Sigma\backslash
int(\cup_{j\neq j_0}D'_j)$. Our first goal is to prove that there exist two
sequences of closed disks $D_1^k, D_2^k$ and a sequence of harmonic maps
$F_k:\Si_{D_1^k}\fl\r^2$ satisfying
\begin{itemize}
\item[(p1)] $D_2^k<D_2^{k+1}<D_1^{k+1}<D_1^{k}$.
\item[(p2)] $|F_{k+1}(p)-F_k(p)|<\delta\,\eta_{k+1}$, for all $p\in\Si'_{D_2^k}$.
\item[(p3)] $R_{k+1}-\eta_{k+1}<|F_{k+1}(p)|<R_{k+1}$, for any $p\in\Si'_{D_1^{k+1}}\cap D_2^{k+1}$.
\item[(p4)] $R_k-\eta_k-\eta_{k+1}<|F_{k+1}(p)|$, for each $p\in\Si'_{D_1^{k+1}}\cap D_2^k$.
\end{itemize}

Using Lemma \ref{mainlemma} for the Riemann surface $\Sigma$, the compact domain
$\cup_{j\neq j_0}D'_j$ and the harmonic map $F_0$, we have the existence of a closed
disk $D_1^1$ such that
\begin{itemize}
\item $D_2^0<D_1^{1}<D_1^{0}$.
\item $|F_{1}(p)-F_0(p)|<\delta\,\eta_{1}$, for all $p\in\Si'_{D_2^0}$.
\item $(R_0+1)-\eta_1<|F_1(p)|<R_0+1$, for any $p\in\partial D_1^1$.
\end{itemize}
Thus, from the last inequality, one gets the existence of a closed disk $D_2^1$ such
that $D_2^0<D_2^{1}<D_1^{1}$ and
$$
(R+1)-\eta_1<|F_1(p)|<R+1,\quad \mbox{ for any } p\in\Si'_{D_1^1}\cap D_2^1.
$$
Therefore, the properties (p1),(p2) and (p3) are satisfied. Observe that the
property (p4) has no sense for this first step.

Now, let us assume the sequences $D_1^{k}, D_2^{k}, F_{k}$ are well defined until
$k=m$ and satisfy properties (p1)--(p4). Then we define the following elements of
the sequences for $k=m+1$ as follows. Note that property (p4) will not be used in
the construction of the remaining terms of the sequence.

Using again Lemma \ref{mainlemma} and bearing in mind that (p3) is satisfied for
$k=m$, we obtain the existence of a closed disk $D_1^{m+1}$ such that
\begin{itemize}
\item $D_2^m<D_1^{m+1}<D_1^{m}$.
\item $|F_{m+1}(p)-F_m(p)|<\delta\,\eta_{m+1}$, for all $p\in\Si'_{D_2^m}$.
\item $R_{m+1}-\eta_{m+1}<|F_{m+1}(p)|<R_{m+1}$, for any $p\in\partial D_1^{m+1}$.
\item $(R_m-\eta_m)-\eta_{m+1}<|F_{m+1}(p)|$, for each
$p\in\Si'_{D_1^{m+1}}\cap D_2^m.$
\end{itemize}
So, since $D_2^m<D_1^{m+1}$ and from the inequality satisfied for the points in
$\partial D_1^{m+1}$, one obtains the existence of a closed disk $D_2^{m+1}$ such
that $D_2^m<D_2^{m+1}<D_1^{m+1}$ and
$$
R_{m+1}-\eta_{m+1}<|F_{m+1}(p)|<R_{m+1},\quad \mbox{ for any }
p\in\Si'_{D_1^{m+1}}\cap D_2^{m+1}.
$$
Hence, the properties (p1)--(p4) are also satisfied for $k=m+1$.

Let us now define the desired harmonic map $\varphi_{j_0}$. For that, we first
define $\Omega_{j_0}=\cup_{k\geq 0}\Si'\backslash D_2^k$. Observe that
$\Si'\backslash D_2^k\subseteq \Si'\backslash D_2^{k+1}$ and $int(D_2^k)\backslash
D_2^{k+1}$ is an open annulus. So, an elementary topological argument gives us
$\Omega_{j_0}\cap int(D_2^0)$ is an open annulus.

In addition, let us see that for any compact set $K\subseteq\Omega_{j_0}$ we have
that ${F_k}_{|K}$ is a Cauchy sequence of harmonic maps. In such a case, the limit
map $\varphi_{j_0}$ is harmonic and well defined in $\Omega_{j_0}$.

Thus, let us fix the compact set $K\subseteq\Omega_{j_0}$. Then, there exists a
non-negative entire number $k_0$ such that $K\subseteq\Si'_{D_2^{k_0}}$. Hence,
using (p2), for $k_1>k_2\geq k_0$
\begin{equation}
\label{dens}
|F_{k_1}(p)-F_{k_2}(p)|<\delta\,\sum_{j=k_2+1}^{k_1}\eta_j,\qquad\mbox{for all }p\in
K,
\end{equation}
 and, so, ${F_k}_{|K}$ is a Cauchy sequence since
 $\sum_{j\geq1}\eta_j\leq1/2<\infty$.

Taking limits in (\ref{dens}), we also obtain that $(\ref{dosprima})$ is satisfied,
since if $p\in\Si'_{D_2^0}$ then
$$
|F_{0}(p)-\varphi_{j_0}(p)|\leq\delta\,\sum_{k\geq1}\eta_k<\delta.
$$

Finally, we prove that $\varphi_{j_0}$ is proper. Let $m$ be a positive entire and
consider a point $p\in\Si'_{D_2^{m+1}}\cap D_2^m$, then  from (\ref{dens}) and (p4)
one has
\begin{eqnarray*}
&&|\varphi_{j_0}(p)-F_{m+1}(p)|\leq \delta\sum_{k\geq m+2}\eta_k<\delta,\\
&&|F_{m+1}(p)|>R_m-\eta_m-\eta_{m+1}>R_m-1=R_0+m-1.
\end{eqnarray*}
And so $|\varphi_{j_0}(p)|>m+(R_0-\delta-1)$.

In particular, if $q\in\Omega_{j_0}\cap D_2^m$ then there exists $m'\geq m$ such
that $q\in\Si'_{D_2^{m'+1}}\cap D_2^{m'}$. Thereby,
$$
|\varphi_{j_0}(q)|>m+(R_0-\delta-1),\qquad\mbox{ for all }q\in\Omega_{j_0}\cap
D_2^m.
$$

Hence, for any real number $N>0$ there exists a positive entire $m$  such that the
set $\{q\in\Omega_{j_0}:\ |\varphi_{j_0}(q)|\leq N\}$ is contained in the compact
set $\Si'_{D_2^m}$. Therefore, $\varphi_{j_0}$ is proper as we wanted to
show.\hfill$\square$

When we consider $\Si$ as the Riemann sphere in Theorem \ref{t1}, and remove only
one closed disk, we get proper harmonic maps from a hyperbolic simply connected
domain $\Omega$ into $\r^2$. Thus, $\Omega$ must be conformal to the unit disk,
obtaining proper harmonic maps from the unit disk into the Euclidean plane.

\begin{nota}
In general, the proper harmonic maps given by Theorem \ref{t1} could be
non-surjective. However, though we shall not worry about this problem, following the
Proof of Lemma \ref{mainlemma}, it is always possible to find a proper harmonic map
$\varphi$ given by Theorem \ref{t1} which is onto.
\end{nota}
It is important to note that in Theorem \ref{t1} we control the topological type of
the domain $\Omega$ where the harmonic map is defined. However, we cannot fix the
conformal type in general. But, this can be done when we start with a domain in the
Riemann sphere.

So, as an immediate consequence of the previous Theorem, we prove that if $\Omega$
is an n-connected domain in $\c$, then there exist proper harmonic maps from
$\Omega$ into $\r^2$.\vspace{.3cm}

\noindent {\it Proof of Corollary \ref{t2}.} Let $\Omega$ be an n-connected domain
in $\c$.

If $\c\backslash\Omega$ is a finite number of points, the result is obvious.
Otherwise, $\Omega$ is conformal to a bounded domain whose boundary is made by a
finite number of pairwise disjoint circles $C_1,\ldots,C_k$ and a finite number of
points $p_{k+1},\ldots,p_{n}$.

Let $D_j$ be the closed disk in the Riemann sphere
$\overline{\c}:=\c\cup\{\infty\}$, such that $D_j$ does not intersect $\Omega$ and
whose boundary is $C_j$. Since, $\overline{\c}\backslash D_j$ is conformal to the
unit disk, using Theorem \ref{t1}, there exists a proper harmonic map
$\varphi_j:\overline{\c}\backslash D_j\fl\r^2$.

In addition, for the points $p_j$ we take $\varphi_j$ as a holomorphic function from
$\overline{\c}\backslash\{p_j\}$ with a pole at $p_j$.

Thus, it is easy to see that the new harmonic map $\varphi=\sum_{j=1}^n
\varphi_j:\Omega\fl\r^2$ is proper.\hfill$\square$\vspace{.3cm}

On the other hand, observe from (\ref{delta}) that every harmonic map defined on a
Riemann surface minus $n$ open disks into $\r^2$ can be approximated by proper
harmonic maps in every compact set contained in the interior of the domain. That is,
we have really showed the following density result.

\begin{corolario}\label{cor2}
Let $\Si$ be a Riemann surface and $D_1,\ldots,D_n$ a family of pairwise disjoint
closed topological disks of $\Si$. Consider a harmonic map
$F:\Si\backslash\cup_{j=1}^nint(D_j)\fl\r^2$. Then for any compact set $K\subseteq
\Si\backslash\cup_{j=1}^n D_j$ and $\delta_0>0$, there exist a hyperbolic domain
$\Omega\subseteq \Si$ containing $K$ and a proper harmonic map
$\varphi:\Omega\fl\r^2$ such that
$$
|\varphi(p)-F(p)|<\delta_0,\qquad\mbox{for all }p\in K.
$$
\end{corolario}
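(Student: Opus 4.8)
The plan is to reduce to a compact ambient surface and then invoke the sharper statement obtained inside the proof of Theorem \ref{t1}. Recall that that proof establishes the following: if $\widetilde\Sigma$ is a \emph{compact} Riemann surface, $D'_1\subseteq int(D_1),\dots,D'_N\subseteq int(D_N)$ are pairwise disjoint closed disks, $F:\widetilde\Sigma\setminus(\cup_k int(D'_k))\fl\r^2$ is harmonic and $\delta_0>0$, then there exist a domain $\Omega$ of the topological type of $\widetilde\Sigma\setminus(\cup_k D_k)$ with $\widetilde\Sigma\setminus(\cup_k D_k)\subseteq\Omega\subseteq\widetilde\Sigma\setminus(\cup_k D'_k)$ and a proper harmonic map $\varphi:\Omega\fl\r^2$ with $|F-\varphi|<\delta_0$ on $\widetilde\Sigma\setminus(\cup_k int(D_k))$. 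Everything then amounts to packaging the given non-compact data into this form.

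First I would choose a compact connected bordered subsurface $S\subseteq\Sigma$ with smooth boundary $\partial S=\gamma_1\cup\dots\cup\gamma_m$ and with $K\cup(\cup_{j=1}^n D_j)\subseteq int(S)$; such an $S$ exists because $\Sigma$ is exhausted by relatively compact domains with smooth boundary. Capping off each circle $\gamma_i$ with a closed disk $E_i$ (a relatively compact collar of $\gamma_i$ being conformally a standard annulus, one extends the collar coordinate to glue in a disk) yields a compact Riemann surface $\widetilde\Sigma$ with $\widetilde\Sigma=S\cup(\cup_i E_i)$, $S\cap E_i=\gamma_i$, and hence $S=\widetilde\Sigma\setminus(\cup_i int(E_i))$. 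Then $D_1,\dots,D_n,E_1,\dots,E_m$ are pairwise disjoint closed disks of $\widetilde\Sigma$, and — since $F$ is genuinely harmonic on the open set $\Sigma\setminus(\cup_j D_j)$, which is a $\Sigma$-neighbourhood of $S\setminus(\cup_j int(D_j))$ away from the circles $\partial D_j$ — the map $F$ defines a harmonic map on $\widetilde\Sigma\setminus(\cup_j int(D_j)\cup\cup_i int(E_i))\fl\r^2$ in exactly the sense required above, with $N:=n+m$.

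Since $K$ lies in $int(S)$ it is disjoint from the compact set $(\cup_j D_j)\cup(\cup_i E_i)$, so I would enlarge these to pairwise disjoint closed disks $\widehat D_j\supseteq D_j$, $\widehat E_i\supseteq E_i$ still missing $K$ (keeping the enlargements below the relevant positive distances), and apply the statement above with small disks $\{D_j\}\cup\{E_i\}$, big disks $\{\widehat D_j\}\cup\{\widehat E_i\}$, the map $F$, and the given $\delta_0$. This produces a domain $\Omega$ with $\widetilde\Sigma\setminus(\cup_j\widehat D_j\cup\cup_i\widehat E_i)\subseteq\Omega\subseteq\widetilde\Sigma\setminus(\cup_j D_j\cup\cup_i E_i)$ and a proper harmonic map $\varphi:\Omega\fl\r^2$ with $|F-\varphi|<\delta_0$ off the big disks. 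Now $K\subseteq\Omega$ because $K$ avoids every big disk; $\Omega\subseteq\widetilde\Sigma\setminus(\cup_i E_i)=int(S)\subseteq\Sigma$; $|\varphi-F|<\delta_0$ on $K$ because $K$ lies off the big disks; and $\Omega$ is hyperbolic, being a subdomain of $\widetilde\Sigma\setminus(\cup_j D_j\cup\cup_i E_i)$, which is the interior of a compact bordered Riemann surface with non-empty boundary, hence hyperbolic (it supports a non-constant bounded harmonic function), and any subdomain of a hyperbolic Riemann surface is hyperbolic (lifting the inclusion to universal covers, the cover of the subdomain admits a non-constant bounded holomorphic function, so it is the disk).

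I do not expect a genuine obstacle: the analytic core is already contained in Theorem \ref{t1} and Lemma \ref{mainlemma}, and what remains is the topological bookkeeping of the non-compact-to-compact reduction. The point requiring the most care is that the capping construction legitimately embeds the relevant part of $\Sigma$ into a compact Riemann surface and that $F$ inherits, near $\partial S=\cup_i\gamma_i$, the regularity needed to apply the general statement — which is clear since $F$ is honestly harmonic on an open subset of $\Sigma$ there. A secondary point is the hyperbolicity of $\Omega$; this is best read off from the subdomain argument above rather than from properness of $\varphi$ (a proper harmonic map to $\r^2$ does not by itself force hyperbolicity, as $z\mapsto z+1/z$ on $\c^\ast$ shows).
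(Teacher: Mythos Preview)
Your argument is correct and rests on the same core input the paper uses: the approximation statement~(\ref{delta}) established inside the proof of Theorem~\ref{t1}. The paper simply records Corollary~\ref{cor2} as a direct consequence of~(\ref{delta}) (``we have really showed the following density result''), without further argument. The difference is that you take the hypothesis ``$\Sigma$ a Riemann surface'' at face value and supply the reduction to a compact ambient surface by exhausting, cutting out a compact bordered piece $S\supseteq K\cup(\cup_j D_j)$, and capping its boundary circles with disks $E_i$ to form $\widetilde\Sigma$; the paper either tacitly assumes $\Sigma$ compact or regards this step as routine. Your version thus fills a small gap the paper leaves implicit, and the remaining verifications (that $K\subseteq\Omega\subseteq\Sigma$, that $|F-\varphi|<\delta_0$ on $K$, and that $\Omega$ is hyperbolic as a subdomain of the interior of a compact bordered surface) are all handled correctly.
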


Notice Schoen and Yau related their conjecture with the problem of non-existence of
a hyperbolic minimal surface in $\r^3$ which properly projects into a plane
\cite{SY}. However, since there exist proper harmonic maps from hyperbolic Riemann
surfaces, the problem of finding such a hyperbolic minimal surface remains open.

Moreover, Schoen and Yau proved that there is no harmonic diffeomorphism from the
unit disk onto a complete surface with non-negative Gauss curvature \cite{SY},
extending the result of Heinz for the Euclidean plane \cite{H}. To this respect, it
would be interesting to study the existence of proper harmonic maps from hyperbolic
Riemann surfaces into a complete surface with non-negative Gauss curvature.

Finally, it should be also mentioned that other Picard type problems for harmonic
diffeomorphisms from the complex plane on Hadamard surfaces have been recently
studied \cite{CR}, \cite{GR}, though the used techniques in that context are
completely different.

\section{Proof of Lemma \ref{mainlemma}.}\label{ML}

We fix a real number $\varepsilon_0>0$ which satisfies some inequalities in terms of
the initial data $r,R,\varepsilon_1,\varepsilon_3$ and whose dependence will be made
clear along this proof.

We take a closed disk $D_3$ on $\Si$ such that $D_2<D_3<D_1$ and a set of points
$\{p_1,\ldots,p_n\}$ in the open set $int(\Si_{D_3})\backslash \Si_{D_2}$
satisfying:
\begin{itemize}
\item[(a1)] There exists a closed disk $D_4$ such that $\{p_1,\ldots,p_n\}\subseteq\partial D_4$ with
$D_2<D_4<D_3$. Moreover, we can assume these points are ordered along the circle
$\partial D_4$.
\item[(a2)] There exist open disks $B_j\subseteq int(\Si_{D_3})\backslash \Si_{D_2}$
such that $p_j,p_{j+1}\in B_j$ and
$$
|F(p)-F(q)|<\varepsilon_0,\qquad \mbox{for each
} p,q\in B_j.
$$
Here and from now on, we shall suppose  $p_{n+1}=p_1$.
\item[(a3)] The well oriented orthonormal basis
$S_j=\{\frac{F(p_j)}{|F(p_j)|},i\,\frac{F(p_j)}{|F(p_j)|}\}$ satisfy
$$
\left|\frac{F(p_j)}{|F(p_j)|}-\frac{F(p_{j+1})}{|F(p_{j+1})|}\right|<\frac{\varepsilon_0}{3(R-r)},\qquad
j=1,\ldots,n.
$$
\end{itemize}

Observe that these points can be chosen by continuity of $F$ in the compact set
$\Si_{D_1}$.

Now, let us take a Riemannian metric $\g{}{}$ on $\Si$ which is compatible with the
conformal structure and consider a real number $\delta>0$ such that:
\begin{itemize}
\item $D_5=D_4\cup\left(\cup_{j=1}^n D(p_j,\delta)\right)$ is a topological
closed disk. Here, $D(p,\delta)$ denotes the closed geodesic disk center at $p$ and
radius $\delta$ for the metric $\g{}{}$.
\item $D(p_j,\delta)\cup D(p_{j+1},\delta)\subseteq B_j$, for all $j=1,\ldots,n.$
\item $D(p_j,\delta)\cap D(p_{k},\delta)$ is empty for any $j\neq k.$
\item $F_{(1,S_j)} (p)>r$ for each $p\in D(p_j,\delta)$.
\end{itemize}

Let $\widehat{\zeta}_j:\Si_{D_1}\backslash\{p_j\}\fl\c$ be a holomorphic function
with a simple pole at $p_j$. Its existence is clear if $\Si$ is a topological sphere
and assured by the Noether gap Theorem when the genus of $\Si$ is positive.

Up to multiplication of $\widehat{\zeta}_j$ by a complex number, if necessary, there
exist a point $q_j\in\partial D(p_j,\delta)\cap int(\Si_{D_4})$ and a simple curve
$\beta_j:[0,1]\fl D(p_j,\delta)$ such that
\begin{itemize}
\item $\beta_j(0)=q_j$ and $\beta_j(1)=p_j$.
\item $\beta_j(]0,1])\subseteq int D(p_j,\delta)$.
\item $\widehat{\zeta}_j(\beta_j(t))$ is a positive real number for all $t\in[0,1[$.
\item $\lim_{t\rightarrow 1}\widehat{\zeta}_j(\beta_j(t))=\infty.$
\end{itemize}

Let us denote by $K_j$ to the compact set $\Si_{D_1}\backslash int(D(p_j,\delta))$
and by $\zeta_j$ to the real part of $\widehat{\zeta}_j$. Then, we can chose
$\kappa>0$ small enough in order to satisfy
\begin{equation}
\label{4} \kappa\ {\rm max}\{|\zeta_j(p)|:\ p\in K_j\}<{\rm
min}\left\{\frac{\varepsilon_0}{n},3\,(R-r)\right\},\qquad j=1,\ldots,n.
\end{equation}

And we call $a_j=\beta_j(t_j)$ where
$$
t_j={\rm min}\{t\in]0,1[:\ \kappa\ \widehat{\zeta}_j(\beta_j(t))=\kappa\
\zeta_j(\beta_j(t))=3(R-r)\}.
$$

Now, we construct a family of harmonic maps $h_j:U_j\fl\r^2$, where
$U_j=\Si_{D_1}\backslash\{p_1,\ldots,p_j\}$. These harmonic maps are defined as
follows
\begin{itemize}
\item $h_0=F$.
\item In the basis $S_j$ we have
$$
{h_j}_{(1,S_j)}={h_{j-1}}_{(1,S_j)}+\kappa\,\zeta_j,\qquad
{h_j}_{(2,S_j)}={h_{j-1}}_{(2,S_j)}.
$$
\end{itemize}
\begin{lema}
These new harmonic maps satisfy the following properties
\begin{itemize}
\item[{\rm (b1)}] $|h_j(p)-h_{j-1}(p)|<\varepsilon_0/n$, for any $p\in U_j\backslash
int(D(p_j,\delta))$.
\item[{\rm (b2)}] $|h_j(a_j)-h_{j-1}(a_{j-1})|<4\,\varepsilon_0$.
\item[{\rm (b3)}] $|h_n(a_j)-h_n(a_{j+1})|<6\,\varepsilon_0$.
\item[{\rm (b4)}] $|h_n(a_j)|>2R-r$.
\end{itemize}
\end{lema}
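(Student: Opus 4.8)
The plan is to verify properties (b1)--(b4) by tracking carefully how each deformation step $h_{j-1}\mapsto h_j$ affects the map, using the key fact that the deformation only changes the first coordinate (in the basis $S_j$) by adding $\kappa\zeta_j$, where $\zeta_j=\mathrm{Re}\,\widehat\zeta_j$ is harmonic away from $p_j$ and is controlled by (\ref{4}) on the compact set $K_j$.

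For (b1): away from $int(D(p_j,\delta))$ the point $p$ lies in $K_j$, so $|h_j(p)-h_{j-1}(p)|=\kappa|\zeta_j(p)|\le \kappa\,\max_{K_j}|\zeta_j|<\varepsilon_0/n$ directly from (\ref{4}). For (b2): I would write $h_j(a_j)-h_{j-1}(a_{j-1})$ as a sum of three contributions. First, $a_{j-1}$ and $a_j$ both lie in $B_{j-1}$ (since $a_k\in\beta_k(]0,1[)\subseteq D(p_k,\delta)$ and the geodesic disks around $p_{j-1},p_j$ sit inside $B_{j-1}$), so by (a2) the value of $h_{j-1}$ moves by less than a controlled multiple of $\varepsilon_0$ between $a_{j-1}$ and $a_j$ — more precisely one compares $h_{j-1}(a_{j-1})$ and $h_{j-1}(a_j)$ using that the earlier deformations $h_0\to\cdots\to h_{j-1}$ each perturbed by at most $\varepsilon_0/n$ on the relevant set together with the oscillation bound (a2) on $F$ itself. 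Second, the change $h_{j-1}(a_j)\to h_j(a_j)$ is exactly $\kappa\zeta_j(a_j)$ along the direction $e_1$ of $S_j$; but $a_j=\beta_j(t_j)$ was chosen precisely so that $\kappa\widehat\zeta_j(\beta_j(t_j))=3(R-r)$, hence $\kappa\zeta_j(a_j)=3(R-r)$, and since $3(R-r)$ is itself bounded by the quantity appearing in (\ref{4}) (note $3(R-r)$ on the right of (\ref{4})) one keeps everything within the stated multiple of $\varepsilon_0$ provided $\varepsilon_0$ was chosen small relative to $R-r$; this is where the "some inequalities in terms of the initial data" fixed at the start of Section \ref{ML} enter. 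Collecting the $\le\varepsilon_0/n$, the oscillation $<\varepsilon_0$, and the jump term, one bounds the total by $4\varepsilon_0$.

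For (b3): $h_n(a_j)-h_n(a_{j+1})$ should be estimated by inserting the intermediate values $h_j(a_j)$ and $h_{j+1}(a_{j+1})$. One has $|h_n(a_j)-h_j(a_j)|<(n-j)\varepsilon_0/n<\varepsilon_0$ because each later deformation $h_k\to h_{k+1}$ (for $k>j$) moves $a_j$ — which lies outside $D(p_{k+1},\delta)$ by the disjointness of the geodesic disks — by at most $\varepsilon_0/n$ via (b1); similarly $|h_n(a_{j+1})-h_{j+1}(a_{j+1})|<\varepsilon_0$. Then $|h_j(a_j)-h_{j+1}(a_{j+1})|<4\varepsilon_0$ by (b2) applied with index $j+1$. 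Summing gives $<6\varepsilon_0$. For (b4): I would use (a3) and the choice of $\delta$ guaranteeing $F_{(1,S_j)}(p)>r$ on $D(p_j,\delta)$. By construction ${h_j}_{(1,S_j)}(a_j)={h_{j-1}}_{(1,S_j)}(a_j)+\kappa\zeta_j(a_j)={h_{j-1}}_{(1,S_j)}(a_j)+3(R-r)$. Expressing ${h_{j-1}}_{(1,S_j)}(a_j)$ in terms of $F_{(1,S_j)}(a_j)>r$ plus the accumulated earlier perturbations (each of size $<\varepsilon_0/n$, and the rotations between the bases $S_1,\dots,S_{j-1}$ and $S_j$ controlled by (a3), which bounds the angle change so tightly precisely because of the factor $3(R-r)$ in its denominator), one gets ${h_{j-1}}_{(1,S_j)}(a_j)>r-C\varepsilon_0$ for a controlled constant $C$. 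Hence ${h_j}_{(1,S_j)}(a_j)>r-C\varepsilon_0+3(R-r)=3R-2r-C\varepsilon_0>2R-r$ once $\varepsilon_0$ is small and using $R>r$; then the later deformations $h_j\to h_n$ shift $a_j$ by at most another $\varepsilon_0$ (as in (b3)), and since $|h_n(a_j)|\ge {h_n}_{(1,S_j)}(a_j)$ (the norm dominates any single coordinate) we conclude $|h_n(a_j)|>2R-r$.

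The main obstacle is bookkeeping the change of orthonormal basis: the deformations at different steps are performed in different frames $S_1,\dots,S_n$, so "adding $\kappa\zeta_j$ in direction $e_1$ of $S_j$" does not commute cleanly with the earlier additions in the frames $S_k$, $k<j$. The point (a3) is designed exactly to make the frames nearly parallel, so that the components $h_{(1,S_j)}$ and $h_{(1,S_k)}$ differ only by terms of order $\varepsilon_0$; making this quantitative — and checking that the particular numerical constants $4,6$ in (b2),(b3) and $2R-r$ in (b4) come out with the stated slack — is the part that requires the preliminary smallness conditions on $\varepsilon_0$ and will occupy most of the argument. Everything else is a telescoping estimate built from (b1) plus (a2), (a3) and the defining property $\kappa\zeta_j(a_j)=3(R-r)$.
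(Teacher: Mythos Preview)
Your treatment of (b1), (b3) and (b4) is essentially the paper's argument. The gap is in (b2).

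You propose to split $h_j(a_j)-h_{j-1}(a_{j-1})$ through the intermediate point $h_{j-1}(a_j)$, bounding $|h_{j-1}(a_{j-1})-h_{j-1}(a_j)|$ by arguing that ``the earlier deformations $h_0\to\cdots\to h_{j-1}$ each perturbed by at most $\varepsilon_0/n$ on the relevant set'' so that $h_{j-1}\approx F$ at both points, and then $|F(a_{j-1})-F(a_j)|<\varepsilon_0$ by (a2). This fails at $a_{j-1}$: since $a_{j-1}\in int(D(p_{j-1},\delta))$, property (b1) does \emph{not} apply to the step $h_{j-2}\to h_{j-1}$ there. In fact $h_{j-1}(a_{j-1})=h_{j-2}(a_{j-1})+3(R-r)e_1^{j-1}$, so $h_{j-1}(a_{j-1})$ differs from $F(a_{j-1})$ by roughly $3(R-r)$, not by something of order $\varepsilon_0$. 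Consequently $|h_{j-1}(a_{j-1})-h_{j-1}(a_j)|\approx 3(R-r)$, which is large. Your remark that ``$3(R-r)$ is itself bounded by the quantity appearing in (\ref{4})'' misreads (\ref{4}): there $3(R-r)$ is the upper bound, not the quantity being bounded.

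The point you are missing is that \emph{both} $h_j(a_j)$ and $h_{j-1}(a_{j-1})$ carry a large jump of size $3(R-r)$, and these must be cancelled against each other. The paper writes
\[
h_j(a_j)-h_{j-1}(a_{j-1})=\bigl[h_{j-1}(a_j)-h_{j-2}(a_{j-1})\bigr]+3(R-r)\bigl[e_1^j-e_1^{j-1}\bigr].
\]
Now $a_j\notin D(p_k,\delta)$ for $k\le j-1$ and $a_{j-1}\notin D(p_k,\delta)$ for $k\le j-2$, so (b1) gives $h_{j-1}(a_j)\approx F(a_j)$ and $h_{j-2}(a_{j-1})\approx F(a_{j-1})$, each within $\varepsilon_0$; then (a2) bounds $|F(a_j)-F(a_{j-1})|<\varepsilon_0$. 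The second bracket is where (a3) actually enters: $3(R-r)\,|e_1^j-e_1^{j-1}|<3(R-r)\cdot\varepsilon_0/(3(R-r))=\varepsilon_0$. Summing gives $4\varepsilon_0$. So (a3) is used in (b2), not in (b4) as you suggest; for (b4) the paper only needs (b1) and (a2) (your invocation of basis rotations there is unnecessary, since (b1) is already a bound on the full norm $|h_k-h_{k-1}|$).
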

\begin{proof}
In order to obtain the inequality (b1), we have from (\ref{4})
$$
|h_j(p)-h_{j-1}(p)|=|\kappa\,\zeta_j(p)|<\varepsilon_0/n.
$$

Let us write $S_j=\{e_1^j,e_2^j\}$, that is, $e_1^j=F(p_j)/|F(p_j)|$ and
$e_2^j=i\,F(p_j)/|F(p_j)|$, then one has for $j\geq 2$
\begin{eqnarray*}
|h_j(a_j)-h_{j-1}(a_{j-1})|&=&|h_{j-1}(a_j)+\kappa\,\zeta_j(a_j)e_1^j-h_{j-2}(a_{j-1})-
\kappa\,\zeta_{j-1}(a_{j-1})e_1^{j-1}|\\
&\leq&|h_{j-1}(a_j)-F(a_j)|+|F(a_j)-F(a_{j-1})|\\&&+|F(a_{j-1})-h_{j-2}(a_{j-1})|+3(R-r)|e_1^j-e_1^{j-1}|.
\end{eqnarray*}

In addition, from (b1), $|h_{j-1}(a_j)-F(a_j)|$ and $|h_{j-2}(a_{j-1})-F(a_{j-1})|$
are less than $\varepsilon_0$. On the other hand, using (a2),
$|F(a_j)-F(a_{j-1})|<\varepsilon_0$ since $a_{j-1},a_j\in B_{j-1}$. Hence, from
(a3), the inequality (b2) is proved.

In order to obtain (b3) we observe
\begin{eqnarray*}
|h_n(a_j)-h_n(a_{j+1})|&\leq& |h_n(a_j)-h_j(a_{j})|+|h_j(a_j)-h_{j+1}(a_{j+1})|\\&&+
|h_{j+1}(a_{j+1})-h_n(a_{j+1})|\ <\ \varepsilon_0+4\varepsilon_0+\varepsilon_0\ =\
6\varepsilon_0,
\end{eqnarray*}
where we have used (b1) and (b2).

Moreover, from (b1),
\begin{eqnarray*}
|h_n(a_j)|&>&|h_j(a_j)|-\varepsilon_0\\
&=&\sqrt{\left({h_{j-1}}_{(1,S_j)}(a_j)+\kappa\,\zeta_j(a_j)\right)^2+{h_{j-1}}_{(2,S_j)}(a_j)^2}-\varepsilon_0\\
&\geq&|{h_{j-1}}_{(1,S_j)}(a_j)+3(R-r)|-\varepsilon_0,
\end{eqnarray*}
since $\kappa\,\zeta_j(a_j)=3(R-r)$.

We also have from (a2) and (b1)
\begin{eqnarray*}
|{h_{j-1}}_{(1,S_j)}(a_j)-F_{(1,S_j)}(p_j)|&\leq&|{h_{j-1}}(a_j)-F(p_j)|\\
&\leq& |{h_{j-1}}(a_j)-F(a_j)|+|F(a_j)-F(p_j)|\ <\ 2\varepsilon_0.
\end{eqnarray*}

As $F_{(1,S_j)}(p_j)=|F(p_j)|>r$ by hypothesis, then
${h_{j-1}}_{(1,S_j)}(a_j)>r-2\varepsilon_0$ and so
$$
|h_n(a_j)|>3R-2r-3\varepsilon_0>2R-r.
$$
Here, we use that $\varepsilon_0$ can be initially chosen with
$R-r-3\varepsilon_0>0$.
\end{proof}

Let $\widetilde{C}_j$ be a simple regular curve contained in $int(D(p_j,\delta))$
which is transversal to $\beta_j([0,1])$ at $a_j$. Consider a small closed connected
neighborhood $C_j$ of the curve $\widetilde{C}_j$ at the point $a_j$, such that each
connected component of $C_j\backslash\{a_j\}$ lies in one side of $\beta_j([0,1])$.
Moreover, $C_j$ is chosen small enough in order to satisfy
\begin{itemize}
\item[(b5)] $|h_j(p)-h_j(a_j)|<\varepsilon_0$ for all $p\in C_j$.
\end{itemize}

Consider a topological closed strip $G_j$ contained in $D(p_j,\delta)$ such that
$\beta_j(]0,t_j[)\subseteq int(G_j)$, and bounded by $C_j$, $\partial D(p_j,\delta)$
and two closed simple curves lying at each side of $\beta_j([0,t_j])$ and joining a
point of $\partial D(p_j,\delta)$ with a point of $C_j$ (see Figure 1).

In addition, we take $G_j$ small enough such that
\begin{itemize}
\item[(b6)] $|h_j(p)-h_{j-1}(p)-k\,\zeta_j(p)e_1^j|<\varepsilon_0$ for all $p\in G_j$.
\item[(b7)] $0<k\,\zeta_j(p)<3(R-r)+\varepsilon_0$ for each $p\in G_j$.
\end{itemize}
Note that $G_j$ can be chosen in order to satisfy these two properties. For (b6), it
is sufficient to observe that $h_j(p)=h_{j-1}(p)+k\,\zeta_j(p)e_1^j$ for any point
$p$ on $\beta_j([0,1[)$. And (b7) is a consequence of the definition of $a_j$ and
$\zeta_j$.

Now, we consider the compact set
$$
K=\left(\Si_{D_4}\backslash\cup_{j=1}^n int(D(p_j,\delta))\right)\cup
\left(\cup_{j=1}^n G_j\right)
$$
and $D_6$ the closed disk $\Sigma\backslash int K$. That is, $K=\Si_{D_6}$.

For this new set we observe that
\begin{itemize}
\item $\beta_j([0,t_j])\subseteq\Si_{D_6}$,
\item $D_2<D_6$,
\item the harmonic maps $h_j$ are well defined in $\Si_{D_6}$,
\item $\Si_{D_6}\cap int (D(p_j,\delta))\subseteq G_j\subseteq \Si_{D_6}\cap
D(p_j,\delta)$.
\end{itemize}

Finally, we take a closed disk $D_7$ such that $D_6<D_7$ and the harmonic maps $h_j$
are well defined in $\Si_{D_7}$.

Until now, we have deformed the initial harmonic map $F$ obtaining a new harmonic
map $h_n$. These harmonic maps are close in the compact domain $\Si_{D_2}$ (property
(b1)). And the norm of $h_n$ is bigger than $2R-r>R$ for the family of points
$a_1,\ldots,a_n$ (property (b4)). Thus, we have obtained a harmonic map which
satisfies part of the conditions of Lemma \ref{mainlemma}, but only near the points
$a_j$.

Our goal is to deform $h_n$ in order to obtain the harmonic map which proves the
Lemma. For that, we shall construct, from $h_n$, a sequence of $n$ harmonic maps.
Each harmonic map will be close to the previous one in a large compact domain and
only modified along a neighborhood of a curve joining $a_{j-1}$ and $a_j$. The last
harmonic map of this sequence will be the one which solves Lemma \ref{mainlemma}.

\begin{figure}[h]
\mbox{}
\begin{center}
\includegraphics[clip,width=15cm]{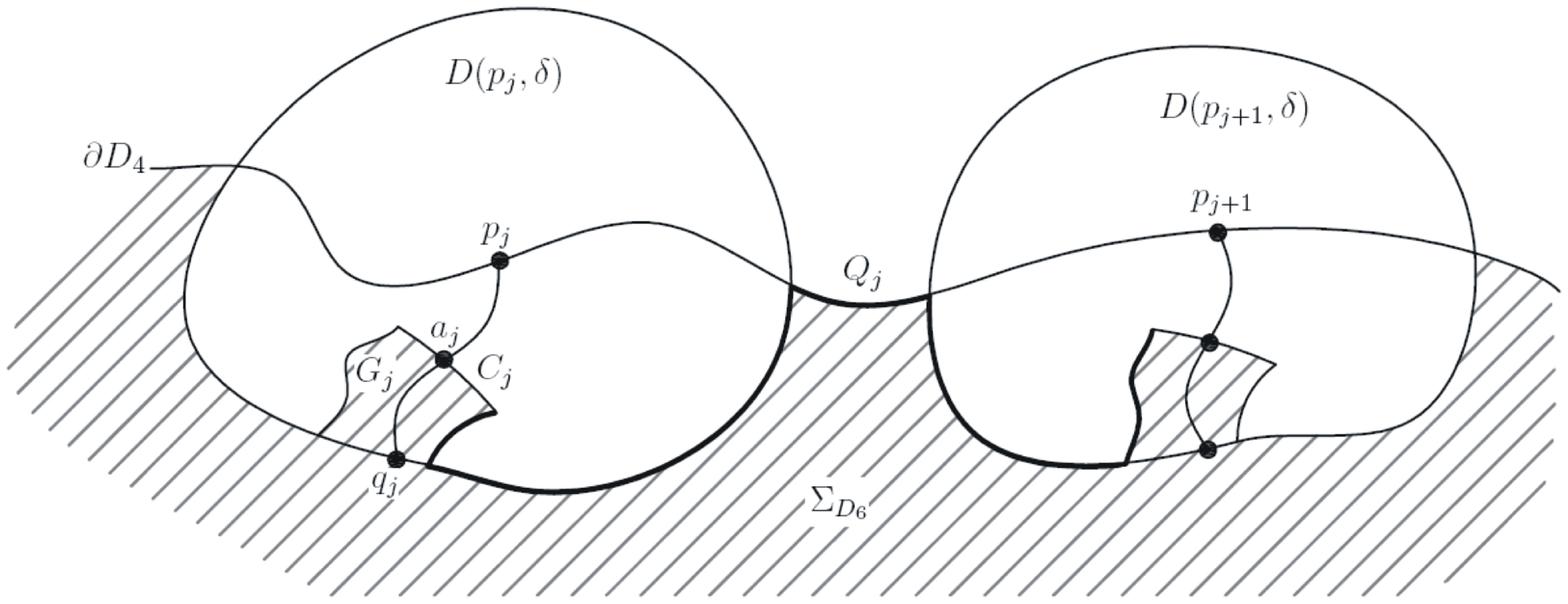}\\
\mbox{ }\hfill Figure 1.\hfill\mbox{ }
\end{center}
\end{figure}

Let us denote by $Q_j$ the connected component of $\overline{\partial
D_6\backslash(C_j\cup C_{j+1})}$ which does not intersect $C_k$ for all $k\neq
j,j+1$. These closed curves satisfy (see Figure 1)
\begin{itemize}
\item $Q_j\cap Q_k$ is empty for each $j\neq k$,
\item $Q_j\subseteq B_j$,
\item $Q_j\cap D(p_k,\delta)$ is empty for each $k\neq j,j+1$,
\item $\partial D_6=\left(\cup_{j=1}^n Q_j\right)\cup\left(\cup_{j=1}^n C_j\right)$.
\end{itemize}

Consider an open neighborhood $\widehat{C}_j$ of $C_j$ such that
\begin{equation}
\label{(f1)} |h_n(p)-h_n(a_j)|<3\,\varepsilon_0,\qquad \mbox{ for all }
p\in\widehat{C}_j\cap\Si_{D_5}.
\end{equation}
This can be done by continuity, because for any $p\in C_j$
$$
|h_n(p)-h_n(a_j)|\leq
|h_n(p)-h_j(p)|+|h_j(p)-h_j(a_j)|+|h_j(a_j)-h_n(a_j)|<3\,\varepsilon_0,
$$
where we have used (b1) and (b5).

We also consider the neighborhoods of $Q_j$
$$
Q_j(L)=\{p\in\Si_{D_1}:\ {\rm dist}(p,Q_j)\leq L\},
$$
and fix $L_0>0$ small enough in order to satisfy similar properties to $Q_j$, that
is,
\begin{itemize}
\item $Q_j(L_0)\cap Q_k(L_0)$ is empty for any $j\neq k$,
\item $Q_j(L_0)\subseteq B_j\cap int(\Si_{D_7})$,
\item $Q_j(L_0)\cap D(p_k,\delta)$ is empty for each $k\neq j,j+1$.
\end{itemize}

We are now ready to define a sequence of harmonic maps $g_j:\Si\backslash
D_7\fl\r^2$, $j=0,\ldots,n$. For that, we choose some basis which will indicate the
direction of the deformation. So, let
$\overline{S}_j=\{\overline{e}_j^1,\overline{e}_j^2\}$, with
$$
\overline{e}_j^2=\frac{h_n(a_j)}{|h_n(a_j)|},\quad
\overline{e}_j^1=-i\,\overline{e}_j^2.
$$

Let $g_0=h_n$ and let us define $g_{j}$ from $g_{j-1}$ as follows. Let
$$
\tau_j=2R-r-{\rm min}\left\{{g_{j-1}}_{(1,\overline{S}_j)}(p):\ p\in Q_j\right\}.
$$
By Royden's Theorem \cite[Theorem 10]{R}, there exists a holomorphic function
$\widehat{\xi}_j:\Sigma_{D_7}\fl\c$ such that
\begin{itemize}
\item $|\widehat{\xi}_j(p)|<\varepsilon_0/n$, for any $p\in \Si_{D_6}\backslash
int(Q_j(L_0))$,
\item $|\widehat{\xi}_j(p)-\tau_j|<\varepsilon_0/n$, for all $p\in
Q_j(L_0/2)$.
\end{itemize}
If we take the harmonic function $\xi_j:\Si_{D_7}\fl\r$ given by the real part of
$\widehat{\xi}_j$, then
\begin{itemize}
\item $|\xi_j(p)|<\varepsilon_0/n$, for any $p\in \Si_{D_6}\backslash
int(Q_j(L_0))$,
\item $|\xi_j(p)-\tau_j|<\varepsilon_0/n$, for all $p\in Q_j(L_0/2)$.
\end{itemize}
Thus, we define
$$
{g_j}_{(1,\overline{S}_j)}={g_{j-1}}_{(1,\overline{S}_j)}+\xi_j,\qquad
{g_j}_{(2,\overline{S}_j)}={g_{j-1}}_{(2,\overline{S}_j)}.
$$

The harmonic maps $g_j$ satisfy the following properties
\begin{itemize}
\item[(c1)] $|g_j(p)-g_{j-1}(p)|<\varepsilon_0/n$, for any $p\in \Si_{D_6}\backslash
int(Q_j(L_0))$.
\item[(c2)]
$|g_j(p)-g_{j-1}(p)-\tau_j\,\overline{e}_1^j|=|\xi_j(p)-\tau_j|<\varepsilon_0/n$,
for all $p\in Q_j(L_0/2)$.
\end{itemize}

Finally, we define the harmonic map $G:\Si_{D_6}\fl\r^2$ as $G=g_n$. Therefore,
Lemma \ref{mainlemma} will be a consequence of the next result by shrinking the
domain $\Si_{D_6}$.

\begin{lema}\label{l4}
The harmonic map $G$ satisfies
\begin{enumerate}
\item $|F(p)-G(p)|<\varepsilon_1$, for all $p\in\Si_{D_2}$,
\item $|G(p)|>R$, for any $p\in\partial D_6$,
\item $|G(p)|>r-\varepsilon_3$, for each $p\in \Si_{D_6}\cap D_2$.
\end{enumerate}
\end{lema}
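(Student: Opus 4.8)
The plan is to run the whole argument with the single free parameter $\varepsilon_0$ and to fix $\varepsilon_0$ at the end, small in terms of $r,R,\varepsilon_1,\varepsilon_3$ (and compatible with the inequalities already imposed). Two features will be used repeatedly. First, the sets $Q_1(L_0),\dots,Q_n(L_0)$ are pairwise disjoint, so each point of $\Si_{D_6}$ lies in at most one of them and is therefore ``essentially moved'' by at most one of the steps $g_{j-1}\mapsto g_j$; off $Q_j(L_0)$ the $j$-th step changes $g$ by $<\varepsilon_0/n$, by (c1). Second, the $j$-th step adds only a multiple of $\overline{e}_1^j$, hence it leaves $\langle g(\cdot),\overline{e}_2^j\rangle=\langle g(\cdot),h_n(a_j)/|h_n(a_j)|\rangle$ unchanged; this is what renders the (a priori uncontrolled) size of $\xi_j$ on the transition ring $Q_j(L_0)\setminus Q_j(L_0/2)$ irrelevant to the \emph{lower} bounds on $|G|$. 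Item~(1) then follows at once: $\Si_{D_2}$ meets no $D(p_k,\delta)$ and no $Q_k(L_0)$, so (b1) telescopes to $|h_n-F|<\varepsilon_0$ and (c1) to $|G-h_n|<\varepsilon_0$ on $\Si_{D_2}$, whence $|F-G|<2\varepsilon_0<\varepsilon_1$.

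For item~(2) I split $\partial D_6=(\cup_jQ_j)\cup(\cup_jC_j)$. On $Q_j$: since $Q_j\subseteq Q_j(L_0/2)$ we have $\xi_j>\tau_j-\varepsilon_0/n$ there, and $\tau_j=2R-r-\min_{Q_j}{g_{j-1}}_{(1,\overline{S}_j)}$ forces ${g_j}_{(1,\overline{S}_j)}>2R-r-\varepsilon_0/n$ on $Q_j$; the other $n-1$ steps move $g$ by $<\varepsilon_0$ on $Q_j$, so $|G|\ge|{g_n}_{(1,\overline{S}_j)}|>2R-r-2\varepsilon_0>R$. On $C_j$: recall $|h_n(p)-h_n(a_j)|<3\varepsilon_0$ (established earlier, from (b1) and (b5)) and $|h_n(a_j)|>2R-r$ by (b4), and $C_j$ meets only $Q_{j-1}(L_0)$ and $Q_j(L_0)$ among the $Q_k(L_0)$. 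If $p\in C_j$ lies in neither, all steps are small at $p$ and $|G(p)|\ge|h_n(a_j)|-4\varepsilon_0>R$. If $p\in C_j\cap Q_j(L_0)$, then $p\notin Q_{j-1}(L_0)$, so $g_{j-1}(p)$ is within $\varepsilon_0$ of $h_n(p)$, the $j$-th step preserves $\langle\cdot,\overline{e}_2^j\rangle$, and the subsequent steps are small at $p$, giving $\langle G(p),\overline{e}_2^j\rangle>|h_n(a_j)|-5\varepsilon_0>R$. If $p\in C_j\cap Q_{j-1}(L_0)$, then $p\notin Q_j(L_0)$ and the same argument in the basis $\overline{S}_{j-1}$ applies: by (b3), $\langle h_n(a_j),\overline{e}_2^{j-1}\rangle\ge|h_n(a_{j-1})|-6\varepsilon_0>2R-r-6\varepsilon_0$, the $(j-1)$-st step fixes $\langle\cdot,\overline{e}_2^{j-1}\rangle$, and all other steps (the $j$-th included, as $p\notin Q_j(L_0)$) are small at $p$, so $\langle G(p),\overline{e}_2^{j-1}\rangle>2R-r-11\varepsilon_0>R$.

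For item~(3) one first checks $|h_n(p)|>r-3\varepsilon_0$ on $\Si_{D_6}\cap D_2$: if $p\notin\cup_kD(p_k,\delta)$ this is (b1) telescoped together with $|F(p)|>r$ (valid since $\Si_{D_6}\cap D_2\subseteq D_2\setminus int\,D_1$); if $p\in D(p_m,\delta)$, then $p\in\Si_{D_6}\cap int\,D(p_m,\delta)\subseteq G_m$ and (b6)--(b7) give $h_n(p)\approx(|F(p_m)|+\kappa\zeta_m(p))e_1^m$ with $\kappa\zeta_m(p)>0$, hence ${h_n}_{(1,S_m)}(p)>r-3\varepsilon_0$. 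Now fix $p\in\Si_{D_6}\cap D_2$. If $p$ lies in no $Q_k(L_0)$, (c1) telescopes and $|G(p)|>r-4\varepsilon_0$. Otherwise $p\in Q_m(L_0)$ for exactly one $m$; the $m$-th step fixes $\langle\cdot,\overline{e}_2^m\rangle$ and every other step is small at $p$, so it suffices to bound $\langle h_n(p),\overline{e}_2^m\rangle>r-O(\varepsilon_0)$. This holds because on $Q_m(L_0)\subseteq B_m$ (using (b6)--(b7) for the portion inside $G_m$ or $G_{m+1}$) $h_n(p)$ is, up to $O(\varepsilon_0)$, a positive multiple of $e_1^m$ or $e_1^{m+1}$ of size $>r$, while $\overline{e}_2^m=h_n(a_m)/|h_n(a_m)|$ is $O(\varepsilon_0/r)$-close to $e_1^m$ (from $|h_n(a_m)-F(p_m)|<2\varepsilon_0$, by (b1) and (a2), and $|F(p_m)|,|h_n(a_m)|>r$) and $e_1^m$ is close to $e_1^{m+1}$ by (a3). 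Hence $|G(p)|\ge\langle G(p),\overline{e}_2^m\rangle>r-O(\varepsilon_0)>r-\varepsilon_3$.

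Collecting the finitely many resulting smallness requirements on $\varepsilon_0$ (of the form $\varepsilon_0<\varepsilon_1/2$, $\varepsilon_0<(R-r)/12$, $\varepsilon_0<\varepsilon_3/8$, $\varepsilon_0<r/8$, together with those already used to obtain (b4)) gives the three conclusions simultaneously, and this is the value of $\varepsilon_0$ fixed at the beginning of this proof. The only genuinely delicate point is the absence of a pointwise bound for $\xi_j$ on $Q_j(L_0)\setminus Q_j(L_0/2)$; as explained above, the pairwise disjointness of the $Q_k(L_0)$ together with the orthogonality of the $j$-th deformation to $\overline{e}_2^j$ defuses it for the lower bounds (2)--(3), while the upper bound (1) only sees $\Si_{D_2}$, which avoids all these rings.
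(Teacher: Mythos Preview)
Your overall architecture mirrors the paper's: item~(1) by telescoping (b1) and (c1) on $\Si_{D_2}$, item~(2) by splitting $\partial D_6$ into the $Q_j$ and the $C_j$ and using, on the $Q_j(L_0)$ pieces, that the $j$-th $g$-step preserves the $\overline{e}_2^j$-component, and item~(3) by the analogous case split on $\Si_{D_6}\cap D_2$. The identification of the ``delicate point'' (no control of $\xi_j$ on $Q_j(L_0)\setminus Q_j(L_0/2)$, defused by orthogonality to $\overline{e}_2^j$) is exactly right.

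There is, however, one genuine error in your argument for item~(3). You justify the closeness of $\overline{e}_2^m=h_n(a_m)/|h_n(a_m)|$ to $e_1^m$ by asserting $|h_n(a_m)-F(p_m)|<2\varepsilon_0$ ``by (b1) and (a2)''. This is false: (b1) for the $m$-th step requires $p\notin int(D(p_m,\delta))$, but $a_m\in int(D(p_m,\delta))$ by construction; in fact the $m$-th step adds exactly $\kappa\zeta_m(a_m)e_1^m=3(R-r)e_1^m$ at $a_m$, so $h_n(a_m)$ and $F(p_m)$ differ by roughly $3(R-r)$, which is large. The correct route (the paper's inequality~(\ref{epsilon})) is to observe
\[
|h_n(a_m)-(|F(p_m)|+3(R-r))\,e_1^m|<3\varepsilon_0,
\]
obtained from (b1) for $k\neq m$, (a2), and $h_m(a_m)=h_{m-1}(a_m)+3(R-r)e_1^m$. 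Since both $|h_n(a_m)|>2R-r>R$ (by (b4)) and $|F(p_m)|+3(R-r)>R$, their unit vectors satisfy $|\overline{e}_2^m-e_1^m|<6\varepsilon_0/R$. With this in hand your bound $\langle h_n(p),\overline{e}_2^m\rangle>r-O(\varepsilon_0)$ does go through, because $|h_n(p)|$ stays below $4R-3r+O(\varepsilon_0)$ on the relevant set (from (b6), (b7) and (b1)), so the cross term $\langle h_n(p),\overline{e}_2^m-e_1^m\rangle$ is $O(\varepsilon_0)$. Once this step is repaired, the rest of your item~(3) is correct and agrees with the paper's cases (i)--(v).
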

\begin{proof}
The first inequality is a consequence of (b1) and (c1), since if $p\in\Si_{D_2}$
$$
|F(p)-G(p)|\leq |g_n(p)-g_0(p)|+|h_n(p)-h_0(p)|<2\,\varepsilon_0,
$$
where we can choose initially $\varepsilon_0<\varepsilon_1/2$.

In order to prove the second inequality, we distinguish four cases:\\[3mm]
(i) Let $p\in \widehat{C}_j\cap Q_j(L_0)$. Using (c1), (\ref{(f1)}) and (b4), one
gets
\begin{eqnarray*}
|G(p)|&=&|g_n(p)|\ >\ |g_j(p)|-\varepsilon_0\ \geq\
|{g_j}_{(2,\overline{S}_j)}(p)|-\varepsilon_0\ =\
|{g_{j-1}}_{(2,\overline{S}_j)}(p)|-\varepsilon_0\\
&\geq&|{g_0}_{(2,\overline{S}_j)}(p)|-2\varepsilon_0\ =\
|{h_n}_{(2,\overline{S}_j)}(p)|-2\varepsilon_0\ >\
|{h_n}_{(2,\overline{S}_j)}(a_j)|-5\varepsilon_0\\
&=&|h_n(a_j)|-5\varepsilon_0\ >\ 2R-r-5\varepsilon_0>R.
\end{eqnarray*}
Where $\varepsilon_0$ initially satisfies $R-r-5\varepsilon_0>0$.\\[3mm]
(ii) Let $p\in \widehat{C}_j\cap Q_{j-1}(L_0)$. Reasoning as in the previous case
and using (b3)
\begin{eqnarray*}
|G(p)|&>&|{h_n}_{(2,\overline{S}_{j-1})}(a_{j})|-5\varepsilon_0\ >\
|{h_n}_{(2,\overline{S}_{j-1})}(a_{j-1})|-11\varepsilon_0\ =\ |h_n(a_{j-1})|-11\varepsilon_0\\
&>& 2R-r-11\varepsilon_0\ >\ R,
\end{eqnarray*}
with $R-r-11\varepsilon_0>0$  a priori.\\[3mm]
(iii) Let $p\in \widehat{C}_j\backslash \left(\cup_{k=1}^nQ_k(L_0)\right)$. From
(c1) and (\ref{(f1)}), one obtains
$$
|G(p)|\ >\ |g_0(p)|-\varepsilon_0\ =\ |h_n(p)|-\varepsilon_0\ >\
|h_n(a_j)|-4\varepsilon_0\ >\ 2R-r-4\varepsilon_0\ >\ R.
$$
(iv) Let $p\in Q_j\backslash\left(\cup_{k=1}^n \widehat{C}_k\right)$. Then, from
(c1) and (c2)
\begin{eqnarray*}
|G(p)|&>&|g_j(p)|-\varepsilon_0\ \geq\
|{g_j}_{(1,\overline{S}_j)}(p)|-\varepsilon_0\ >\
|{g_{j-1}}_{(1,\overline{S}_j)}(p)+\tau_j|-2\varepsilon_0\\
&\geq&{g_{j-1}}_{(1,\overline{S}_j)}(p)+\tau_j-2\varepsilon_0\ \geq\
2R-r-2\varepsilon_0>R.
\end{eqnarray*}

Finally, we prove the third inequality, that is, $|G(p)|>r-\varepsilon_3$. For that,
we need to distinguish five different cases, depending on the region of
$\Sigma_{D_6}$ where $p$ is
located.\\[3mm]
(i) Let $p\not\in\left(\cup_{j=1}^nD(p_j,\delta)\right)\cup\left(\cup_{j=1}^n
Q_j(L_0)\right)$. Using (c1) and (b1)
$$
|G(p)|\ >\ |h_n(p)|-\varepsilon_0\ >\ |F(p)|-2\varepsilon_0\ >\ r-2\varepsilon_0\ >\
r-\varepsilon_3,
$$
where initially $\varepsilon_0<\varepsilon_3/2$.\\[3mm]
(ii) Let $p\in D(p_j,\delta)\backslash \left(\cup_{k=1}^n Q_k(L_0)\right)$. In this
case we shall use (c1), (b1), (b6), (a2), (b7) and $\varepsilon_0$ is chosen less
than $\varepsilon_3/5$
\begin{eqnarray*}
|G(p)|&>&|h_n(p)|-\varepsilon_0\ >\ |h_j(p)|-2\varepsilon_0\ >\
|h_{j-1}(p)+\kappa\,\zeta_j(p)e_1^j|-3\varepsilon_0\\
&\geq&|{h_{j-1}}_{(1,S_j)}(p)+\kappa\,\zeta_j(p)|-3\varepsilon_0\ >\
|F_{(1,S_j)}(p)+\kappa\,\zeta_j(p)|-4\varepsilon_0\\
&\geq&|F_{(1,S_j)}(p_j)+\kappa\,\zeta_j(p)|-5\varepsilon_0\ =\
||F(p_j)|+\kappa\,\zeta_j(p)|-5\varepsilon_0\\
&>&r-5\varepsilon_0\ >\ r-\varepsilon_3.
\end{eqnarray*}
(iii) Let $p\in int(D(p_j,\delta))\cap Q_j(L_0)$. First, we observe
\begin{eqnarray*}
|h_n(a_j)-(F(p_j)+3(R-r)e_1^j)|&\leq&|h_n(a_j)-h_j(a_j)|+|F(a_j)-F(p_j)|\\
&+&|h_j(a_j)-F(a_j)-3(R-r)e_1^j|\ <\ 3\varepsilon_0,
\end{eqnarray*}
where we have used that $h_j(a_j)=h_{j-1}(a_j)+3(R-r)e_1^j$, and the formulas (a2)
and (b1).

In addition, we have  $|h_n(a_j)|>2R-r>R$ from (b4), and $|F(p_j)|+3(R-r)>3R-2r>R$.

Hence, as $F(p_j)+3(R-r)e_1^j=(|F(p_j)|+3(R-r))e_1^j$, one has
\begin{equation}\label{epsilon}
\begin{array}
{rcl} \left|\overline{e}_2^j-e_1^j\right|&=&{\displaystyle \left|
\frac{h_n(a_j)}{|h_n(a_j)|}-e_1^j\right|\ <\ 6\varepsilon_0{\rm min}\left\{
\frac{1}{|h_n(a_j)|},\frac{1}{|F(p_j)|+3(R-r)}\right\} }\\[4mm] &<& {\displaystyle
\frac{6\varepsilon_0}{R}.}
\end{array}
\end{equation}

Now, for the point $p$, we get
\begin{eqnarray*}
|G(p)|&>&|g_j(p)|-\varepsilon_0\ \geq\
|{g_j}_{(2,\overline{S}_j)}(p)|-\varepsilon_0\ >\
|{h_n}_{(2,\overline{S}_j)}(p)|-2\varepsilon_0\\
&>&|{h_j}_{(2,\overline{S}_j)}(p)|-3\varepsilon_0\ =\ |\langle
h_j(p),\overline{e}_2^j\rangle|-3\varepsilon_0\\
&=&|\langle h_j(p),e_1^j\rangle+\langle
h_j(p),\overline{e}_2^j-e_1^j\rangle|-3\varepsilon_0\\& \geq& |{
h_j}_{(1,S_j)}(p)|-|\langle h_j(p),\overline{e}_2^j-e_1^j\rangle|-3\varepsilon_0\ >\
|{ h_j}_{(1,S_j)}(p)|-\frac{6\varepsilon_0}{R}|h_j(p)|-3\varepsilon_0,
\end{eqnarray*}
where we have used (c1) and (b1) again.

So, we estimate $|{ h_j}_{(1,S_j)}(p)|$ and $|h_j(p)|$,
\begin{eqnarray*}
|{ h_j}_{(1,S_j)}(p)|&=&|{ h_{j-1}}_{(1,S_j)}(p)+\kappa\,\zeta_j(p)|\ >\
|F_{(1,S_j)}(p)+\kappa\,\zeta_j(p)|-\varepsilon_0\\
&>&|F_{(1,S_j)}(p_j)+\kappa\,\zeta_j(p)|-2\varepsilon_0\ =\
||F_{(1,S_j)}(p_j)|+\kappa\,\zeta_j(p)|-2\varepsilon_0\\
&>&|F(p_j)|-2\varepsilon_0\ >\ r-2\varepsilon_0.
\end{eqnarray*}
Here, we used (b1), (a2) and (b7).

Moreover,
\begin{eqnarray*}
|h_j(p)|&\leq&|h_{j-1}(p)|+\kappa|\zeta_j(p)|+\varepsilon_0\ <\
|F(p)|+3(R-r)+2\varepsilon_0\ <\ 4R-3r+2\varepsilon_0,
\end{eqnarray*}
from (b6), (b1) and (b7).

Therefore,
$$
|G(p)|\ >\
(r-2\varepsilon_0)-\frac{6\varepsilon_0}{R}(4R-3r+2\varepsilon_0)-3\varepsilon_0\ >\
r-\varepsilon_3.
$$
Again, we have initially taken $\varepsilon_0$ small enough in terms of $R,r$ and
$\varepsilon_3$.\\[3mm]
(iv) Let $p\in int(D(p_{j+1},\delta))\cap Q_j(L_0)$. This case is analogous to the
previous one, bearing in mind
$$
|\overline{e}_2^j-e_1^{j+1}|\ \leq\ |\overline{e}_2^j-e_1^{j}|+|e_1^{j}-e_1^{j+1}|\
<\ \frac{6\varepsilon_0}{R}+\frac{\varepsilon_0}{3(R-r)}.
$$
The last inequality is a consequence of (\ref{epsilon}) and (a3).\\[3mm]
(v) Let $p\in Q_j(L_0)\backslash \left(\cup_{k=1}^nint(D(p_k,\delta))\right)$. From
(c1), (b1), (\ref{epsilon}) and (a2), one obtains
\begin{eqnarray*}
|G(p)|&>&|g_j(p)|-\varepsilon_0\ \geq\
|{g_j}_{(2,\overline{S}_j)}(p)|-\varepsilon_0\ =\
|{g_{j-1}}_{(2,\overline{S}_j)}(p)|-\varepsilon_0\\
&>&|{h_n}_{(2,\overline{S}_j)}(p)|-2\varepsilon_0\ >\
|{F}_{(2,\overline{S}_j)}(p)|-3\varepsilon_0\ >\
|{F}_{(1,S_j)}(p)|-3\varepsilon_0-\frac{6\varepsilon_0}{R}\\
&>&|{F}_{(1,S_j)}(p_j)|-4\varepsilon_0-\frac{6\varepsilon_0}{R}\ >\
r-4\varepsilon_0-\frac{6\varepsilon_0}{R}\ >\ r-\varepsilon_3.
\end{eqnarray*}
This finishes Lemma \ref{l4} and also proves Lemma \ref{mainlemma}.
\end{proof}

\footnotesize The first author is partially supported by MCYT-FEDER, Grant No
MTM2007-61775 and the second author by MCYT-FEDER, Grant No MTM2007-65249. The
authors are partially supported by Grupo de Excelencia P06-FQM-01642 Junta de
Andalucía.
\end{document}